\numberwithin{equation}{section}
\newtheorem{theorem}{Theorem}[section]
\newtheorem{definition}[theorem]{Definition}
\newtheorem{lemma}[theorem]{Lemma}
\newtheorem{remark}[theorem]{Remark}
\newtheorem{proposition}[theorem]{Proposition}
\begin{document}

\title[An Integral equation driven by fractional Brownian motion] 
{An integrodifferential equation driven by fractional Brownian motion} 

\author[H. Bessaih] 
{Hakima Bessaih} 

\address{
H. Bessaih,  University of Wyoming,
Department of Mathematics,
Dept. 3036,
1000 East University Avenue,
Laramie WY 82071,
United States}
\email{Bessaih@uwyo.edu} 

\author[C. Wijeratne] 
{Chandana Wijeratne} 
\address{
C. Wijeratne, University of Wyoming,
Department of Mathematics,
Dept. 3036,
1000 East University Avenue,
Laramie WY 82071,
United States}
\email{cwijerat@uwyo.edu} 

\subjclass[2010]{Primary: 35R09, Secondary: 60G22}

\keywords{Stochastic Differential Equations, Fractional Brownian Motion}

\begin{abstract}
This paper deals with the well posedness of an integrodifferential equation that describes a vortex filament associated to a 3D turbulent fluid flow. This equation is driven by a fractional Brownian motion of Hurst parameter $H>1/2$. We prove the global existence and uniqueness of a solution in a functional space of Sobolev type.
\end{abstract}

\maketitle

\section{Introduction} 
The definition of stochastic integrals with respect to the fractional Brownian motions has been investigated intensively by several authors, see for example \cite{AlMaNu01}, \cite{NuRa01}, \cite{CaCoMo03}, \cite{KlZa99} and for a more comprehensive introduction to this topic see \cite{BiHuOkZh08} and \cite{Coutin2007}.  There are two approaches to the construction of such integrals, the pathwise approach that is based on the Riemann-Stieltjes construction and is due to Young \cite{Yo36} and the rough path approach. While the pathwise theory is fairly well understood, it is applicable only for Hurst parameters $H>1/2$, the rough path theory is receiving a lot of interest, see for example \cite{Lyons98}, \cite{GuLeTi06}, \cite{CoLe07} and the references therein and is applicable for Hurst parameters $H>1/4$.

In the present paper, we are using the pathwise argument to solve an integral equation which is an approximation of the line vortex equation. In particular, we will assume that the vorticity field associated to an ideal inviscid incompressible homogeneous fluid in $\mathbb{R}^{3}$ is described by a fractional Brownian motion with Hurst parameter $H>1/2$ and we will study its evolution through a pathline equation. Let us denote by $\omega$ this vorticity field then
$$\vec{\omega}:=\nabla\times \vec{u},$$
where \noindent $\vec{u}$ is the velocity field in $\mathbb{R}^{3}$.
If we denote by $\vec{X}_{t}(\vec{x})$ the position at time $t$ of the fluid particle that at time 0 was at $\vec{x}\in\mathbb{R}^{3}$. We have the following path-lines equation

\begin{equation}\label{path_line}
\frac{d\vec{X}_{t}(\vec{x})}{dt}=\vec{u}(t, \vec{X}_{t}(\vec{x})).
\end{equation}
Now, let us assume that the vorticity field is concentrated on a fractional Brownian curve $\vec{B}^{H}$ as follows

\begin{equation}\label{vortex}
\vec{\omega}(t,{\vec{x}})=\Gamma\int_{0}^{1}\delta({\vec{x}}-\vec{B}^{H}(t,\xi))d\vec{B}^{H}(t,\xi),
\end{equation}
\noindent where $\delta$ is the usual ``Dirac delta function", $\Gamma>0$ is the intensity of vorticity, $\xi\in[0,1]$ is the arc-length, while the parameter $t$ represents the time.
Using the Biot-Savart formula, the equation \eqref{path_line} becomes

\begin{equation}\label{vortex_smooth}
\frac{d\vec{X}(t,\xi)}{dt} = \int_{0}^{1}Q(\vec{X}(t,\xi)-\vec{B}^{H}(t,\eta))d\vec{B}^{H}(t,\eta),
\end{equation}
\noindent with
\begin{equation*}
\vec{X}(0,\xi)=\vec{\phi}(\xi).
\end{equation*}

\noindent Here $\vec{\phi}$ is the initial condition and the matrix valued function $Q$ is the singular matrix\\
\begin{center}
$\frac{-\Gamma}{4\pi|\vec{y}|^{3}}\left(
                         \begin{array}{ccc}
                           0 & y_{3} & -y_{2} \\
                           -y_{3} & 0& y_{1} \\
                           y_{2} & -y_{1} & 0 \\
                         \end{array}
                       \right).$\\
\end{center}
For an introduction to this topic we refer to \cite{BeMa02}, \cite{Ch04}, \cite{Sa92}, \cite{MaPu94} and for a more probabilistic approach to
\cite{Fl02}.  When $\vec{B}^{H}$ is replaced by $\vec{X}$, the equation \eqref{vortex_smooth} has been studied by \cite{BeBe02} for a smooth closed curve $\vec{X}$ in the Sobolev space $W^{1,2}$ and an existence and uniqueness theorem has been proved for local solutions in time. Later, local solutions in some spaces of H\"{o}lder continuous functions have been investigated in \cite{BeGuRu05} that have been extended to global solutions in \cite{BeGu07}.

In the present paper, we will be dealing with an approximation of equation \eqref{vortex_smooth}, the approximation will be on the matrix $Q$,  while the study of equation \eqref{vortex_smooth} is left for a subsequent paper. Furthermore, in order to make the exposition of our results easier to understand, we will make the assumption that the function $\vec{X}$ is a real valued function, however, our results will still be true for a vector valued function $\vec{X}$. More precisely, we are interested in the following integrodifferential equation

\begin{equation}\label{integral_equation}
Y(t,\xi)=\phi(\xi)+\int_{0}^{t}\int_{0}^{\xi}A(Y(s,\eta))dB^{H}(s,\eta)ds,
\end{equation}

\noindent where for all $t\in [0,T],\ B^{H}(t)=\left\{B^{H}(t,\xi),\ \xi\in [0,1]\right\}$ is a real valued fractional Brownian motion of Hurst parameter $H>\frac{1}{2}$, $A$ is a bounded and differentiable real valued function with a Lipschitz continuity property, $\xi$ is a parameter in [0,1].

Let us describe the content of the paper. In Section 2, we introduce the notions of FBM, our assumptions and the functional setting of our problem. In Section 3, we recall the notions of fractional integrals and some related  a priori estimates that would be used later. In Section 4, we introduce the notion of integration wrt to a FBM. Section 5 contains our main results about the existence and uniqueness of a global solution for the equation
\eqref{integral_equation} with their proofs.

\section{Some preliminaries}

\subsection{Fractional Brownian motion}
\begin{definition}
Let $B^{H} = \{B^{H}_{\eta},\eta\geq0\}$ be a stochastic process, and $H\in(0,1)$. $B^{H}$ is called a Fractional Brownian Motion (FBM) with Hurst parameter $H$, if it is a centered Gaussian process with the covariance function
\begin{equation}\label{2.1}
R_{H}(\gamma,\eta)=E[B^{H}(\gamma)B^{H}(\eta)]=\frac{1}{2}(\eta^{2H}+\gamma^{2H}-|\eta-\gamma|^{2H}).
\end{equation}
\end{definition}


\subsection{The integrodifferential equation}
In this paper we study the following equation.

\begin{equation}\label{PDE}
\frac{\partial Y(t,\xi)}{\partial t} = \int_{0}^{\xi}A(Y(t,\eta))dB^{H}(t,\eta),\ \xi\in[0,1]\ {\rm and}\ t\in [0,T]
\end{equation}
\noindent with
\begin{equation*}
Y(0,\xi)=\phi(\xi)
\end{equation*}

\noindent or alternatively, we can consider the integral form

\begin{equation}\label{4.2}
Y(t,\xi) = \phi(\xi) + \int_{0}^{t}\int_{0}^{\xi}A(Y(s,\eta))dB^{H}(s,\eta)ds,
\end{equation}

\noindent where $B^{H}$ is a fractional Brownian motion defined on a complete probability space $(\Omega, \mathcal{F}, P)$, $\phi(\xi)$ is the initial condition and $A:\mathbb{R}\rightarrow \mathbb{R}$ is a measurable function that satisfies the assumptions given below.\\

\subsection{Assumptions}
\noindent Let us assume that:\\
\noindent \textbf{A1}. $A$ is differentiable.\\
\noindent \textbf{A2}. There exists $M_{1}>0$ such that $|A(x)-A(y)| \leq M_{1}|x-y|$ for all $x,y\in\mathbb{R}$.\\
\noindent \textbf{A3}. There exists $M_{2}>0$ such that $|A(x)|\leq M_{2}$ for all $x\in\mathbb{R}$.\\
\noindent \textbf{A4}.  For every $N$ there exists $M_{N}>0$, such that\\$|A'(x)-A'(y)| \leq M_{N}|x-y|$ for all $|x|,|y|\leq N$.

\subsection{Functional Setting}

\noindent Let $\frac{1}{2}<H <1$, $1-H <\alpha <\frac{1}{2}$. We will introduce the following functional spaces.\\

\noindent Let $C([0,T],W^{\alpha,\infty}[0,1])$ be the space of measurable functions
$f:[0,T]\times[0,1]\rightarrow \mathbb{R}$ such that
\begin{equation}
\parallel f\parallel_{\alpha, \infty} := \sup_{t\in[0,T]} \sup_{\xi\in[0,1]} \left(|f(t,\xi)| + \int_{0}^{\xi}\frac{|f(t,\xi)-f(t,\eta)|}{(\xi-\eta)^{\alpha+1}}d\eta\right) < \infty.
\end{equation}\\

\noindent Let $C([0,T],W_{0}^{1-\alpha,\infty}[0,1])$ be the space of measurable functions
$f:[0,T]\times[0,1]\rightarrow \mathbb{R}$ such that
\begin{equation}
\parallel f\parallel_{1-\alpha, \infty,0} := \sup_{t\in[0,T]} \sup_{0<\eta<\xi<1} \left(\frac{|f(t,\xi)-f(t,\eta)|}{(\xi-\eta)^{1-\alpha}} + \int_{\eta}^{\xi}\frac{|f(t,\gamma)-f(t,\eta)|}{(\gamma-\eta)^{2-\alpha}}d\gamma\right) < \infty.
\end{equation}\\

\noindent Let $W^{\alpha,1}([0,1])$ be the space of measurable functions $f:[0,1]\rightarrow \mathbb{R}$ such that
\begin{equation}
\parallel f\parallel_{\alpha, 1} := \int_{0}^{1}\frac{|f(\eta)|}{\eta^{\alpha}}d\eta + \int_{0}^{1}\int_{0}^{\eta}\frac{|f(\eta)-f(\delta)|}{(\eta-\delta)^{\alpha+1}}d\delta d\eta < \infty.
\end{equation}\\

\section{Some a priori estimates}

Since FBM with Hurst parameters $H>1/2$ have sample paths that are $\lambda$-H\"older continuous for all
$\lambda\in (0,H)$, the construction of the integral with respect to a FBM will be performed using a pathwise argument by means of fractional derivatives and integrals. We refer to \cite{KiMaSa93} and \cite{NuRa01} for more details.

\subsection{Fractional integrals and derivatives}

\noindent As usual we denote by $L^{p}(a,b)$ the space of all Lebesgue measurable functions $f:(a,b)\rightarrow \mathbb{R}$ such that
\begin{equation}
\parallel f\parallel_{L^{p}(a,b)} := \Big(\int_{a}^{b}|f(x)|^{p}dx\Big)^{\frac{1}{p}}<\infty
\end{equation}
\noindent for $a<b$ and $1\leq p<\infty$. Let us recall some definitions on Riemann-Liouville fractional integrals and Weyl derivative.

\begin{definition}
Let $f \in L^{1}(a,b)$ and $\alpha > 0$. The left-sided and right-sided Riemann-Liouville fractional integrals of $f$ of order $\alpha$ are defined for almost all $x\in (a,b)$ by

\begin{equation}
I^{\alpha}_{a+}f(x)= \frac{1}{\Gamma(\alpha)} \int_{a}^{x}\frac{f(y)}{(x-y)^{1-\alpha}}dy
\end{equation}

\noindent and

\begin{equation}
I^{\alpha}_{b-}f(x)= \frac{(-1)^{-\alpha}}{\Gamma(\alpha)} \int_{x}^{b}\frac{f(y)}{(y-x)^{1-\alpha}}dy
\end{equation}

\noindent respectively, where $(-1)^{-\alpha} = e^{-i\pi\alpha}$ and $\Gamma(\alpha) = \int_{0}^{\infty}r^{(\alpha -1)}e^{-r}dr$ is the Gamma function or the Euler integral of the second kind.
\end{definition}

\begin{definition}
Suppose $I_{a+}^{\alpha}(L^{p})$ is the image of $L^{p}(a,b)$ under the operator $I_{a+}^{\alpha}$ and $I_{b-}^{\alpha}(L^{p})$ is the image of $L^{p}(a,b)$ under the operator $I_{b-}^{\alpha}$. Let $0<\alpha<1$, then we define the Weyl derivative for almost all $x\in(a,b)$ as

\begin{equation}\label{D plus}
D^{\alpha}_{a+}f(x) = \frac{1}{\Gamma(1-\alpha)}\left(\frac{f(x)}{(x-a)^{\alpha}}
+\alpha\int_{a}^{x}\frac{f(x)-f(y)}{(x-y)^{\alpha+1}}dy\right)1_{(a,b)}(x)
\end{equation}

\noindent when $f\in I_{a+}^{\alpha}(L^{p})$, and

\begin{equation}\label{D minus}
D^{\alpha}_{b-}f(x) = \frac{(-1)^{\alpha}}{\Gamma(1-\alpha)}\left(\frac{f(x)}{(b-x)^{\alpha}}
+\alpha\int_{x}^{b}\frac{f(x)-f(y)}{(y-x)^{\alpha+1}}dy\right)1_{(a,b)}(x)
\end{equation}

\noindent when $f\in I_{b-}^{\alpha}(L^{p})$.

\noindent The convergence of the integrals at the singularity $y=x$ holds pointwise for almost all $x\in(a,b)$ when $p=1$, and in $L^{p}$ sense when $1<p<\infty$.
\end{definition}

\noindent We now introduce the following notations in order to define the generalized Stieltjes integrals. Assuming the limits exist and are finite, let\\

\noindent $f(a+) = \lim_{\epsilon \searrow 0} f(a+\epsilon)$,\\
\noindent $g(b-) = \lim_{\epsilon \searrow 0} g(b-\epsilon)$,\\
\noindent $f_{a+}(x) = [f(x)-f(a+)]1_{(a,b)}(x)$,\\
\noindent $g_{b-}(x) = [g(x)-g(b-)]1_{(a,b)}(x)$.\\

\begin{definition}
Suppose that $f$ and $g$ are functions such that $f(a+)$, $g(a+)$ and $g(b-)$ exist, $f_{a+} \in I^{\alpha}_{a+}(L^{p})$ and $g_{b-} \in I^{1-\alpha}_{b-}(L^{q})$ for some $p$, $q \geq 1$, $\frac{1}{p}+\frac{1}{q} \leq 1$ and  $0<\alpha<1$. Then the generalized Stieltjes integral of $f$ with respect to $g$ is defined as follows, (using \eqref{D plus} and \eqref{D minus}),
\begin{equation}\label{3.4.1}
\int_{a}^{b}fdg = (-1)^{\alpha} \int_{a}^{b}D^{\alpha}_{a+}f_{a+}(x)D^{1-\alpha}_{b-}g_{b-}(x)dx + f(a+)[g(b-)-g(a+)].
\end{equation}
\end{definition}

\noindent Remark that if $\alpha p <1$, under the assumptions of the above definition, we have $f \in I^{\alpha}_{a+}(L^{p})$, and \eqref{3.4.1} can be written as
\begin{equation}\label{3.4.2}
\int_{a}^{b}fdg = (-1)^{\alpha} \int_{a}^{b}D^{\alpha}_{a+}f_{a+}(x)D^{1-\alpha}_{b-}g_{b-}(x)dx.
\end{equation}

\noindent For $a\leq c < d \leq b$, the restriction of $f \in I^{\alpha}_{a+}(L^{p}(a,b))$ to $(c,d)$ belongs to $I^{\alpha}_{c+}(L^{p}(c,d))$ and the continuation of $f \in I^{\alpha}_{c+}(L^{p}(c,d))$ by zero beyond $(c,d)$ belongs to $I^{\alpha}_{a+}(L^{p}(a,b))$. Thus, if $f \in I^{\alpha}_{a+}(L^{p})$ and $g_{b-} \in I^{1-\alpha}_{b-}(L^{q})$, then the integral $\int_{a}^{b}1_{(c,d)}fdg$ in the sense of \eqref{3.4.2} exists for any $a\leq c<d\leq b$, and whenever the left-hand side is defined in the sense of \eqref{3.4.2}, we have
\begin{equation}\label{3.4.3}
\int_{c}^{d}fdg = \int_{a}^{b}1_{(c,d)}fdg.
\end{equation}

\subsection{A priori estimates}

\noindent We have the following basic estimates. Let

\begin{equation}
\Lambda_{\alpha}(g) := \frac{1}{\Gamma(1-\alpha)}\sup_{0<\eta<\xi<1}|(D^{1-\alpha}_{\xi-}g_{\xi-})(\eta)|.
\end{equation}

\noindent Then
\begin{equation}
\Lambda_{\alpha}(g) \leq \frac{1}{\Gamma(1-\alpha)\Gamma(\alpha)}\parallel g \parallel_{1-\alpha, \infty,0} <\infty.
\end{equation}\\

\noindent If $f \in W^{\alpha, 1}(0,1)$, and $g \in W_{0}^{1-\alpha, \infty}(0,1)$ then the integral $\int_{0}^{\xi}fdg$ exists for all $\xi \in [0,1]$.

\noindent Also, by \eqref{3.4.3} we have
\begin{equation*}
\int_{0}^{\xi}fdg = \int_{0}^{1}f1_{(0,\xi)}dg.
\end{equation*}

\noindent Using \eqref{3.4.2} we get
\begin{equation*}
\int_{0}^{\xi}fdg = (-1)^{\alpha}\int_{0}^{\xi}D^{\alpha}_{0+}f(\eta)D_{\xi-}^{1-\alpha}g_{\xi-}(\eta)d\eta.
\end{equation*}
\noindent Then
\begin{equation*}
\left|\int_{0}^{\xi}fdg\right| \leq \sup_{0<\eta<\xi}|D_{\xi-}^{1-\alpha}g_{\xi-}(\eta)|\int_{0}^{\xi}|D_{0+}^{\alpha}f(\eta)|d\eta.
\end{equation*}
\noindent Hence
\begin{equation}\label{3.5.7}
\left|\int_{0}^{\xi}fdg\right| \leq \Lambda_{\alpha}(g)\parallel f\parallel_{\alpha, 1}.
\end{equation}\\

\section{Stochastic integrals with respect to a FBM}

Let us recall the following result, for details and proofs see \cite{NuRa01}.

\begin{lemma}\label{expectation_lemma}
Let $\{B^{H}_{\eta}: \eta\geq0\}$ be a real-valued FBM of Hurst parameter $H\in(\frac{1}{2},1)$. If $1-H<\alpha<\frac{1}{2}$, then
\begin{equation}
\mathbb{E}\sup_{0\leq \gamma \leq \eta \leq 1}|D_{\eta-}^{1-\alpha}B^{H}_{\eta-}(\gamma)|^{p}<\infty,
\end{equation}
\noindent for all $p \in [1,\infty)$.
\end{lemma}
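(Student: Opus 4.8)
The plan is to reduce the statement to the well-known H\"older regularity of the fractional Brownian motion, after which the bound becomes a deterministic computation uniform in the parameters $\gamma\le\eta$. First I would write out the integrand explicitly. Since $B^{H}$ has continuous paths, $B^{H}_{\eta-}(x)=[B^{H}(x)-B^{H}(\eta)]1_{(0,\eta)}(x)$, and substituting into \eqref{D minus} with order $1-\alpha$ and right endpoint $b=\eta$ gives
\begin{equation*}
D_{\eta-}^{1-\alpha}B^{H}_{\eta-}(\gamma)=\frac{(-1)^{1-\alpha}}{\Gamma(\alpha)}\left(\frac{B^{H}(\gamma)-B^{H}(\eta)}{(\eta-\gamma)^{1-\alpha}}+(1-\alpha)\int_{\gamma}^{\eta}\frac{B^{H}(\gamma)-B^{H}(y)}{(y-\gamma)^{2-\alpha}}dy\right),
\end{equation*}
so that it suffices to bound the two bracketed terms uniformly in $0\le\gamma\le\eta\le1$, the prefactor having modulus $1/\Gamma(\alpha)$.

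The key probabilistic input is the H\"older continuity of the sample paths. Because $\alpha>1-H$, one can fix an exponent $\lambda$ with $1-\alpha<\lambda<H$. I would invoke the Garsia--Rodemich--Rumsey inequality together with the Gaussian moment bound $\mathbb{E}|B^{H}(t)-B^{H}(s)|^{2m}=C_{m}|t-s|^{2mH}$ to produce a random variable
\begin{equation*}
G_{\lambda}:=\sup_{0\le s<t\le1}\frac{|B^{H}(t)-B^{H}(s)|}{(t-s)^{\lambda}}
\end{equation*}
that is finite almost surely and, more importantly, satisfies $\mathbb{E}[G_{\lambda}^{p}]<\infty$ for every $p\in[1,\infty)$. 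This is the step that carries all the probabilistic content; everything afterwards is deterministic.

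Granting the bound $|B^{H}(\gamma)-B^{H}(y)|\le G_{\lambda}(y-\gamma)^{\lambda}$, the first term is controlled by $G_{\lambda}(\eta-\gamma)^{\lambda-1+\alpha}\le G_{\lambda}$, using $\lambda-1+\alpha>0$ and $\eta-\gamma\le1$. For the second term,
\begin{equation*}
\int_{\gamma}^{\eta}\frac{|B^{H}(\gamma)-B^{H}(y)|}{(y-\gamma)^{2-\alpha}}dy\le G_{\lambda}\int_{\gamma}^{\eta}(y-\gamma)^{\lambda-2+\alpha}dy=\frac{G_{\lambda}}{\lambda-1+\alpha}(\eta-\gamma)^{\lambda-1+\alpha}\le\frac{G_{\lambda}}{\lambda-1+\alpha},
\end{equation*}
where the choice $\lambda>1-\alpha$ guarantees both the integrability at $y=\gamma$ and the positivity of the resulting exponent. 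Collecting these, there is a constant $C=C(\alpha,\lambda)$ with $\sup_{0\le\gamma\le\eta\le1}|D_{\eta-}^{1-\alpha}B^{H}_{\eta-}(\gamma)|\le C\,G_{\lambda}$ pointwise in $\omega$, and raising to the power $p$ and taking expectations yields $\mathbb{E}\sup_{0\le\gamma\le\eta\le1}|D_{\eta-}^{1-\alpha}B^{H}_{\eta-}(\gamma)|^{p}\le C^{p}\,\mathbb{E}[G_{\lambda}^{p}]<\infty$.

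The main obstacle is precisely the H\"older-seminorm moment bound $\mathbb{E}[G_{\lambda}^{p}]<\infty$: one must pass from the pointwise moment estimate on increments to a uniform-in-$(s,t)$ control with integrable supremum, which is exactly what the Garsia--Rodemich--Rumsey inequality (applied with $\Psi(x)=x^{2m}$ and a power weight $p(u)=u^{a}$, with $m$ chosen large and $a$ close to $H+1/m$ so that the resulting exponent $a-1/m$ exceeds $\lambda$) is designed to deliver. Once that is in hand the remaining estimates are elementary, and the same argument works verbatim in the vector-valued setting.
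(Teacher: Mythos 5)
Your proof is correct and is, in substance, the standard argument: the paper itself gives no proof of this lemma but defers to \cite{NuRa01}, whose proof (Lemma 7.5 there) likewise writes out the Weyl derivative explicitly, uses the Garsia--Rodemich--Rumsey inequality to produce a $\lambda$-H\"older seminorm $G_{\lambda}$ with moments of all orders for some $1-\alpha<\lambda<H$, and then bounds both terms deterministically by a constant multiple of $G_{\lambda}$, exactly as you do. One small parameter slip in your parenthetical: with $\Psi(x)=x^{2m}$ and $p(u)=u^{a}$, finiteness of the GRR double integral forces $a<H+1/(2m)$ (not $a$ near $H+1/m$); choosing $a$ close to $H+1/(2m)$ still yields the exponent $a-1/m$ close to $H-1/(2m)$, which exceeds $\lambda$ once $m$ is large, so the argument goes through unchanged.
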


\noindent Let $\{B^{H}_{\eta}: \eta \in [0,1]\}$ be a real-valued FBM, with the Hurst parameter $\frac{1}{2}<H<1$, defined on a complete probability space $(\Omega, \mathcal{F}, P)$. By \eqref{2.1}, we have
\begin{center}
$\mathbb{E}(|B^{H}_{\eta}-B^{H}_{\gamma}|^{2}) = |\eta-\gamma|^{2H}$,
\end{center}
\noindent and for any $p\geq 1$ we have
\begin{equation}
\parallel B^{H}_{\eta}-B^{H}_{\gamma}\parallel_{p} = [\mathbb{E}(|B^{H}_{\eta}-B^{H}_{\gamma}|^{p})]^{\frac{1}{p}} = c_{p}|\eta-\gamma|^{H}.
\end{equation}
\noindent By Lemma \eqref{expectation_lemma} we know that the random variable
\begin{equation}
G = \frac{1}{\Gamma(1-\alpha)}\sup_{0<\gamma<\eta<1}|D_{\eta-}^{1-\alpha}B^{H}_{\eta-}(\gamma)|
\end{equation}
\noindent has moments of all orders.\\

\noindent As a consequence for $1-H<\alpha<\frac{1}{2}$, the pathwise integral $\int_{0}^{\eta}u_{\gamma}dB^{H}_{\gamma}$ exists when $u = \{u_{\eta}, \eta\in[0,1]\}$ is a stochastic process whose trajectories belong to the space $W^{\alpha,1}$ and $B^{H}_{\gamma}$ is a FBM with $H>\frac{1}{2}$. Moreover, we have the estimate
\begin{align}
\left|\int_{0}^{1}u_{\gamma}dB^{H}_{\gamma}\right|\leq G \|u\|_{\alpha,1}.
\end{align}

%

\section{Main results with proofs}

\subsection{Main result}

We state the main result of the present paper:

\begin{theorem}\label{main_theorem}
Let $\alpha\in(1-H,\frac{1}{2})$. Assume that $\phi \in W^{\alpha,\infty}[0,1]$ and the function $A$ satisfies the assumptions A1, A2, A3, and A4. Then for every $T>0$, there exists a unique stochastic process $Y\in L^{0}((\Omega,\mathcal{F},P),C([0,T],W^{\alpha,\infty}[0,1]))$ solution of the equation \eqref{PDE}.
\end{theorem}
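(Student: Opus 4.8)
The plan is to realize the solution as the fixed point of the operator
\begin{equation*}
(\mathcal{T}Y)(t,\xi)=\phi(\xi)+\int_{0}^{t}\int_{0}^{\xi}A(Y(s,\eta))dB^{H}(s,\eta)\,ds
\end{equation*}
on $C([0,T],W^{\alpha,\infty}[0,1])$, run pathwise for almost every $\omega$. I write $G_{s}$ for the random variable of the form $G$ attached to the path $B^{H}(s,\cdot)$ and note that $\sup_{s\leq T}G_{s}<\infty$ almost surely, so every estimate below holds on a full-measure event. The first point is that the inner spatial integral is well defined: if $Y(s,\cdot)\in W^{\alpha,\infty}[0,1]$ then $A(Y(s,\cdot))\in W^{\alpha,1}[0,1]$, the first term of $\|\cdot\|_{\alpha,1}$ being controlled by the boundedness A3 and the double-integral term by the Lipschitz bound A2 together with the fact that $\|\cdot\|_{\alpha,\infty}$ dominates the inner $\xi$-integral. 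This gives $\|A(Y(s,\cdot))\|_{\alpha,1}\leq C(M_{2}+M_{1}\|Y(s,\cdot)\|_{\alpha,\infty})$ and hence, by \eqref{3.5.7},
\begin{equation*}
\left|\int_{0}^{\xi}A(Y(s,\eta))dB^{H}(s,\eta)\right|\leq G_{s}\,\|A(Y(s,\cdot))\|_{\alpha,1}.
\end{equation*}

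Then I would show $\mathcal{T}$ maps the space into itself. The supremum part of $\|\mathcal{T}Y\|_{\alpha,\infty}$ is handled by integrating the last display in $s$. For the fractional seminorm I would use, via \eqref{3.4.3},
\begin{equation*}
(\mathcal{T}Y)(t,\xi)-(\mathcal{T}Y)(t,\eta)=\phi(\xi)-\phi(\eta)+\int_{0}^{t}\int_{\eta}^{\xi}A(Y(s,\gamma))dB^{H}(s,\gamma)\,ds,
\end{equation*}
and estimate the localized integral $\int_{\eta}^{\xi}A(Y(s,\gamma))dB^{H}(s,\gamma)$ by the local form of the fractional estimate of Section 3. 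Collecting terms yields a bound of the shape $\|\mathcal{T}Y\|_{\alpha,\infty}\leq\|\phi\|_{\alpha,\infty}+C\,T\,(\sup_{s\leq T}G_{s})(M_{2}+M_{1}\|Y\|_{\alpha,\infty})<\infty$.

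For local existence and uniqueness I would prove a contraction estimate, reducing matters to bounding $\|A(Y_{1}(s,\cdot))-A(Y_{2}(s,\cdot))\|_{\alpha,1}$ by $C(N)\|Y_{1}(s,\cdot)-Y_{2}(s,\cdot)\|_{\alpha,\infty}$, with $N$ a bound for the supremum norms of the two iterates. The first term of the norm uses A2; the crucial difference-of-differences
\begin{equation*}
\bigl(A(Y_{1}(\eta))-A(Y_{1}(\delta))\bigr)-\bigl(A(Y_{2}(\eta))-A(Y_{2}(\delta))\bigr)
\end{equation*}
I would expand by the mean value theorem and control its remainder with the local Lipschitz continuity A4 of $A'$ on $\{|x|\leq N\}$. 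With the factor $T_{0}$ from the time integral this produces
\begin{equation*}
\|\mathcal{T}Y_{1}-\mathcal{T}Y_{2}\|_{\alpha,\infty,T_{0}}\leq C(N)\,T_{0}\,\Bigl(\sup_{s\leq T_{0}}G_{s}\Bigr)\|Y_{1}-Y_{2}\|_{\alpha,\infty,T_{0}},
\end{equation*}
a contraction for $T_{0}$ small, hence a unique solution on $[0,T_{0}]$.

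To globalize I would first derive an a priori bound: since $A$ is bounded, the stability estimate and Gronwall's inequality give $\sup_{t\leq T}\|Y(t,\cdot)\|_{\alpha,\infty}<\infty$ almost surely, fixing $N$ uniformly on $[0,T]$. As the contraction step $T_{0}$ then depends only on $M_{1},M_{2},M_{N}$ and $\sup_{s\leq T}G_{s}$, not on the iterates, I can cover $[0,T]$ by finitely many intervals of length $T_{0}$, solve successively with each terminal value as the next initial datum, and patch the pieces into a unique global solution; measurability in $\omega$ is inherited from the Picard iterates. The main obstacle is the pair of fractional-calculus estimates—the localized integral in the seminorm part of the self-map step and the difference-of-differences term in the contraction step—which is exactly where the machinery of Section 3, the finiteness of $G$, and assumption A4 become indispensable.
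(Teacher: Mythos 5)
Your proposal is correct and follows essentially the same route as the paper: a pathwise fixed-point argument in $C([0,T],W^{\alpha,\infty}[0,1])$, with the self-map bound obtained from A2/A3 via the $W^{\alpha,1}$-estimate \eqref{3.5.7} and the localized fractional estimates, the contraction obtained from the mean-value-theorem expansion of the difference-of-differences controlled by A4 (the paper's Proposition \ref{proposition2}), and globalization via a Gronwall a priori bound that makes the local existence time uniform. The only structural difference is cosmetic: the paper first proves the deterministic result for a fixed $g\in C([0,T],W_{0}^{1-\alpha,\infty}[0,1])$ (Theorems \ref{theorem_local} and \ref{theorem_global}) and then substitutes $g=B^{H}(\omega)$, whereas you run the same estimates directly on the paths of $B^{H}$; both versions rest on the same unproved assertion that $\sup_{t\leq T}\Lambda_{\alpha}(B^{H}(t,\cdot))<\infty$ almost surely.
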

As we already defined in Section 4, the stochastic integral wrt to the FBM is well defined pathwise. The above theorem will be proved using a contraction principle that will give us the existence and uniqueness of local solutions. In order to get the global solution, we will need to have an a priori estimate of the solution in some functional spaces. The proof will follow after several steps and will be stated in Section \ref{main_proof}.


\subsection{Fixed point argument}
\noindent Consider the operator
\begin{equation*}
F:C([0,T],W^{\alpha,\infty}[0,1])\longrightarrow C([0,T],W^{\alpha,\infty}[0,1])
\end{equation*}
defined by
\begin{equation}\label{operator}
F(Y(t,\xi)):=\phi(\xi) + \int_{0}^{t}\int_{0}^{\xi}A(Y(s,\gamma))dg_{\gamma}ds,
\end{equation}
\noindent where
$g \in C([0,T],W_{0}^{1-\alpha,\infty}[0,1])$, $\phi \in W^{\alpha,\infty}[0,1]$  and $A$ satisfies the assumptions A1, A2, A3, and A4.
\begin{remark}
Let us remark that the function $g$ in the equation \eqref{operator} is a function of time $t$ and $\gamma$ and the fractional integration is with respect to the parameter $\gamma\in (0,1)$.
\end{remark}

For a given $R>0$, let us define the ball $B_{R, T}$ in $C([0,T],W^{\alpha,\infty}[0,1])$ as
\begin{center}
$B_{R, T} = \{Y\in C([0,T],W^{\alpha,\infty}[0,1]) : ||Y||_{\alpha,\infty} \leq R\}$.
\end{center}


\begin{lemma}\label{lemma1}
Given a positive constant $R_{1}>\|\phi\|_{\alpha,\infty}$, there exists $T_{1}>0$ such that
$F(B_{R_{1},T_{1}})\subseteq B_{R_{1},T_{1}}$. The time $T_{1}$ depends on $R_{1}$, $\alpha$ and the initial condition
$\|\phi\|_{\alpha,\infty}$.
\end{lemma}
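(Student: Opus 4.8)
The plan is to show that the operator $F$ maps the ball $B_{R_1,T_1}$ into itself by estimating $\|F(Y)\|_{\alpha,\infty}$ for an arbitrary $Y\in B_{R_1,T_1}$ and choosing $T_1$ small. Recall that the norm $\|F(Y)\|_{\alpha,\infty}$ has two contributions: the supremum of $|F(Y)(t,\xi)|$ and the supremum of the Sobolev--Slobodeckij seminorm $\int_0^\xi \frac{|F(Y)(t,\xi)-F(Y)(t,\eta)|}{(\xi-\eta)^{\alpha+1}}\,d\eta$. Since $F(Y)(t,\xi)=\phi(\xi)+\int_0^t\int_0^\xi A(Y(s,\gamma))\,dg_\gamma\,ds$, and $\phi$ already contributes $\|\phi\|_{\alpha,\infty}$, the whole game is to bound the double-integral remainder by something of the form $C(R_1)\,T_1$, so that $\|F(Y)\|_{\alpha,\infty}\le \|\phi\|_{\alpha,\infty}+C(R_1)T_1\le R_1$ once $T_1\le (R_1-\|\phi\|_{\alpha,\infty})/C(R_1)$; this is why the hypothesis $R_1>\|\phi\|_{\alpha,\infty}$ is exactly what makes the slack available.

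First I would treat the pointwise part. For fixed $t,\xi$ the inner integral $\int_0^\xi A(Y(s,\gamma))\,dg_\gamma$ is a Stieltjes integral of the type controlled by \eqref{3.5.7}, giving
\begin{equation*}
\left|\int_0^\xi A(Y(s,\gamma))\,dg_\gamma\right|\le \Lambda_\alpha(g_s)\,\|A(Y(s,\cdot))\|_{\alpha,1}.
\end{equation*}
Here $\Lambda_\alpha(g_s)\le \frac{1}{\Gamma(1-\alpha)\Gamma(\alpha)}\|g\|_{1-\alpha,\infty,0}$ is uniformly bounded in $s$ by the hypothesis on $g$. The remaining work is to estimate $\|A(Y(s,\cdot))\|_{\alpha,1}$: using assumption A3 ($|A|\le M_2$) for the first term of the $W^{\alpha,1}$ norm and the Lipschitz assumption A2 ($|A(x)-A(y)|\le M_1|x-y|$) together with the already-available control of the $W^{\alpha,\infty}$ seminorm of $Y(s,\cdot)$ for the double-integral term, I can bound $\|A(Y(s,\cdot))\|_{\alpha,1}\le C(M_1,M_2,\alpha)(1+\|Y(s,\cdot)\|_{\alpha,\infty})\le C(M_1,M_2,\alpha)(1+R_1)$ uniformly in $s\in[0,t]$. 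Integrating in $s$ then produces a factor $t\le T_1$.

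Next I would handle the seminorm part, which is the main obstacle. I need
\begin{equation*}
\int_0^\xi \frac{1}{(\xi-\eta)^{\alpha+1}}\left|\int_0^t\!\!\int_\eta^\xi A(Y(s,\gamma))\,dg_\gamma\,ds\right|d\eta,
\end{equation*}
where I have used that the difference $F(Y)(t,\xi)-F(Y)(t,\eta)$ cancels the $\phi$ contribution in the increment sense only up to the $\phi$ seminorm already counted, and that the inner Stieltjes integral over $[0,\xi]$ minus the one over $[0,\eta]$ equals the integral over $[\eta,\xi]$ by \eqref{3.4.3}. The delicate point is that near the diagonal $\eta\to\xi$ the weight $(\xi-\eta)^{-\alpha-1}$ is nonintegrable, so I must extract a gain of order $(\xi-\eta)^{\beta}$ with $\beta>\alpha$ from the Stieltjes integral over the shrinking interval $[\eta,\xi]$. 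This is achieved by applying the estimate \eqref{3.5.7} on $[\eta,\xi]$ rather than $[0,1]$: one shows $\left|\int_\eta^\xi A(Y(s,\gamma))\,dg_\gamma\right|\le C(R_1)(\xi-\eta)^{1-\alpha}$, the Hölder-type gain coming from the $(1-\alpha)$-Hölder regularity of $g$ encoded in $\|g\|_{1-\alpha,\infty,0}$, which renders $(\xi-\eta)^{1-\alpha-\alpha-1}=(\xi-\eta)^{-2\alpha}$ integrable precisely because $\alpha<\tfrac12$.

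Finally, assembling the two parts gives $\|F(Y)\|_{\alpha,\infty}\le \|\phi\|_{\alpha,\infty}+C(R_1,\alpha,\|g\|_{1-\alpha,\infty,0})\,T_1$, and choosing
\begin{equation*}
T_1\le \frac{R_1-\|\phi\|_{\alpha,\infty}}{C(R_1,\alpha,\|g\|_{1-\alpha,\infty,0})}
\end{equation*}
yields $F(B_{R_1,T_1})\subseteq B_{R_1,T_1}$, with $T_1$ depending exactly on $R_1$, $\alpha$, and $\|\phi\|_{\alpha,\infty}$ as claimed. I would also remark in passing that $F(Y)$ is continuous in $t$ with values in $W^{\alpha,\infty}[0,1]$, so that $F$ indeed maps into $C([0,T_1],W^{\alpha,\infty}[0,1])$; this follows from the same estimates applied to time increments and requires no new ideas beyond the constraint $\alpha<\tfrac12<H$.
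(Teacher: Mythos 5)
Your proposal is correct, and its skeleton — split $\|F(Y)\|_{\alpha,\infty}$ into the pointwise part and the seminorm part, bound everything beyond $\|\phi\|_{\alpha,\infty}$ by $C(R_1)\,T_1$, then take $T_1\le (R_1-\|\phi\|_{\alpha,\infty})/C(R_1)$ — is the paper's; your pointwise estimate via \eqref{3.5.7} and the bound $\|A(Y(s,\cdot))\|_{\alpha,1}\le \frac{M_2}{1-\alpha}+M_1\|Y\|_{\alpha,\infty}$ is literally the paper's computation. Where you genuinely diverge is the handling of the singular seminorm term. The paper keeps $|A(Y(s,\gamma))|$ and the inner seminorm of $Y$ \emph{inside} the double integral, applies Fubini to exchange the $\eta$- and $\gamma$-integrations, and evaluates $\int_{0}^{\gamma}(\xi-\eta)^{-\alpha-1}(\gamma-\eta)^{-\alpha}d\eta\le b^{(1)}_{\alpha}(\xi-\gamma)^{-2\alpha}$ by the substitution $\eta=\gamma-(\xi-\gamma)x$ and the Beta function (see \eqref{estimate_4}--\eqref{estimate_5}). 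You instead sup out A3 and the ball bound \emph{early}, getting the uniform estimate $\left|\int_{\eta}^{\xi}A(Y(s,\gamma))dg_{\gamma}\right|\le C(R_1)(\xi-\eta)^{1-\alpha}$ and then integrating $(\xi-\eta)^{-2\alpha}$ directly. This works: restricting the representation \eqref{3.4.2} to $(\eta,\xi)$ gives $\left|\int_{\eta}^{\xi}A(Y(s,\gamma))dg_{\gamma}\right|\le \Lambda_{\alpha}(g)\left(\frac{M_2}{1-\alpha}(\xi-\eta)^{1-\alpha}+\alpha M_1\|Y\|_{\alpha,\infty}(\xi-\eta)\right)$, which is $\le C(R_1)(\xi-\eta)^{1-\alpha}$ on the ball. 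One small imprecision: the H\"older-type gain here comes from integrating the weight $(\gamma-\eta)^{-\alpha}$ of the $W^{\alpha,1}(\eta,\xi)$-norm against the bound $|A|\le M_2$ — $\Lambda_{\alpha}(g)$ enters only as a uniform constant — not directly from the $(1-\alpha)$-H\"older regularity of $g$ as you state; but the claimed inequality is true and provable with the paper's tools, so this is cosmetic. What the two routes buy: yours is more elementary (no Fubini, no Beta function) but leans crucially on being able to sup out $A$ and the $Y$-seminorm before touching the singular weight; the paper's version, which keeps the integrand inside and produces pointwise weights $(\xi-\gamma)^{-2\alpha}+\gamma^{-\alpha}$, is exactly the form that gets recycled in Proposition \ref{proposition1} for the contraction estimate of Lemma \ref{lemma2}, where the integrand $A(Y_1)-A(Y_2)$ must remain inside the integral so that $\|Y_1-Y_2\|_{\alpha,\infty}$ can be extracted; so the paper's Lemma \ref{lemma1} computation doubles as a warm-up for that harder step. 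Your closing remarks — continuity in $t$ from the same estimates, and the honest acknowledgment that the constant (hence $T_1$) also involves $\|g\|_{1-\alpha,\infty,0}$ and $M_1,M_2$, just as the paper's $b^{(2)}_{\alpha}$ does — are both fine.
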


\begin{proof}

\begin{equation}\label{alpha_infty_norm}
||F(Y)||_{\alpha,\infty}=\sup_{t\in[0,T],\xi\in[0,1]}\left(|F(Y(t,\xi))| + \int_{0}^{\xi}\frac{|F(Y(t,\xi))-F(Y(t,\eta))|}{(\xi-\eta)^{\alpha+1}}d\eta\right).
\end{equation}

\noindent Consider the first term inside the parenthesis in \eqref{alpha_infty_norm}. We note that

\begin{eqnarray}
|F(Y(t,\xi))|
&\leq & |\phi(\xi)| + \int_{0}^{t}\left|\int_{0}^{\xi}A(Y(s,\gamma))dg_{\gamma}\right|ds\nonumber
\\ &\leq & |\phi(\xi)| + \int_{0}^{t}\left(\Lambda_{\alpha}(g)||A(Y(s,\cdot))||_{\alpha,1}\right)ds,\nonumber
\end{eqnarray}\\

\noindent where
$\Lambda_{\alpha}(g)=\frac{1}{\Gamma(1-\alpha)}\sup_{0<\eta<\xi<1}|D^{1-\alpha}_{\xi-}g_{\xi-}(s,\gamma)|$.\\

\noindent Further,

\begin{align}
& ||A(Y(s,\cdot))||_{\alpha,1}\nonumber
\\&= \int_{0}^{1}\frac{|A(Y(s,\gamma)|}{\gamma^{\alpha}}d\eta + \int_{0}^{1}\int_{0}^{\gamma}\frac{|A(Y(s,\gamma))-A(Y(s,\delta))|}{(\gamma-\delta)^{\alpha+1}}d\delta d\gamma \nonumber
\\&\leq M_{2}\int_{0}^{1}\gamma^{-\alpha}d\gamma + M_{1}\int_{0}^{1}\int_{0}^{\gamma}\frac{|Y(s,\gamma)-Y(s,\delta)|}{(\gamma-\delta)^{\alpha+1}}d\delta d\gamma\nonumber
\\&\leq \frac{M_{2}}{1-\alpha} + M_{1}\int_{0}^{1}\left(\sup_{s\in[0,T],\gamma\in[0,1]}\int_{0}^{\gamma}\frac{|Y(s,\gamma)-Y(s,\delta)|}{(\gamma-\delta)^{\alpha+1}}d\delta\right)d\gamma \nonumber
\\&\leq \frac{M_{2}}{1-\alpha} + M_{1}||Y||_{\alpha,\infty}. \nonumber
\end{align}

\noindent Thus, we see that
\begin{align}\label{estimate_1}
|F(Y(t,\xi))| \leq |\phi(\xi)| + T\sup_{0\leq t\leq T}\Lambda_{\alpha}(g)\left(\frac{M_{2}}{1-\alpha}+M_{1}||Y||_{\alpha,\infty}\right).
\end{align}

\noindent Now, we consider the second term in \eqref{alpha_infty_norm}.

\begin{align}\label{estimate_2}
& \int_{0}^{\xi}\frac{|F(Y(t,\xi))-F(Y(t,\eta))|}{(\xi-\eta)^{\alpha+1}}d\eta\nonumber
\\&= \int_{0}^{\xi}\frac{1}{(\xi-\eta)^{\alpha+1}}\left|\phi(\xi)-\phi(\eta) + \int_{0}^{t}\int_{\eta}^{\xi}A(Y(s,\gamma))dg_{\gamma}ds\right|d\eta\nonumber
\\&\leq \int_{0}^{\xi}\frac{|\phi(\xi)-\phi(\eta)|}{(\xi-\eta)^{\alpha+1}} + \int_{0}^{\xi}\frac{1}{(\xi-\eta)^{\alpha+1}}\left|\int_{0}^{t}\int_{\eta}^{\xi}A(Y(s,\gamma))dg_{\gamma}ds\right|d\eta \nonumber
\\&= \int_{0}^{\xi}\frac{|\phi(\xi)-\phi(\eta)|}{(\xi-\eta)^{\alpha+1}} + \int_{0}^{t}\left(\int_{0}^{\xi}\frac{1}{(\xi-\eta)^{\alpha+1}}\left|\int_{\eta}^{\xi}A(Y(s,\gamma))dg_{\gamma}\right|d\eta \right)ds.
\end{align}

\noindent Again we consider the second term in \eqref{estimate_2} and get 

\begin{align}\label{estimate_3}
& \int_{0}^{\xi}\frac{1}{(\xi-\eta)^{\alpha+1}}\left|\int_{\eta}^{\xi}A(Y(s,\gamma))dg_{\gamma}ds\right|d\eta \nonumber\\
&\leq  \Lambda_{\alpha}(g) \int_{0}^{\xi}\frac{1}{(\xi-\eta)^{\alpha+1}}\bigg(\int_{\eta}^{\xi}
\frac{|A(Y(s,\gamma))|}{(\gamma-\eta)^{\alpha}}d\gamma\nonumber
\\&+ \int_{\eta}^{\xi}\int_{\eta}^{\gamma}\frac{|A(Y(s,\gamma))-A(Y(s,\delta))|}{(\gamma-\delta)^{\alpha+1}}d\delta d\gamma\bigg)d\eta \nonumber
\\&\leq \Lambda_{\alpha}(g)\int_{0}^{\xi}\frac{1}{(\gamma-\eta)^{\alpha+1}}\int_{\eta}^{\xi}\frac{|A(Y(s,\gamma))|}{(\xi-\eta)^{\alpha}}d\gamma d\eta \nonumber
\\&+\Lambda_{\alpha}(g) M_{1}\int_{0}^{\xi}\frac{1}{(\xi-\eta)^{\alpha+1}}\int_{\eta}^{\xi}\int_{\eta}^{\gamma} \frac{|Y(s,\gamma)-Y(s,\delta)|}{(\gamma-\delta)^{\alpha+1}} d\delta d\gamma d\eta.
\end{align}

\noindent First, we use the Fubini theorem on the first term in \eqref{estimate_3} and then the substitution $\eta=\gamma-(\xi-\gamma)x$. Thus,

\begin{align}\label{estimate_4}
& \int_{0}^{\xi}\frac{1}{(\xi-\eta)^{\alpha+1}}\int_{\eta}^{\xi}\frac{|A(Y(s,\gamma))|}{(\gamma-\eta)^{\alpha}}d\gamma d\eta \nonumber
\\&= \int_{0}^{\xi}\int_{\eta}^{\xi}(\xi-\eta)^{-\alpha-1}(\gamma-\eta)^{-\alpha}|A(Y(s,\gamma))|d\gamma d\eta \nonumber
\\&= \int_{0}^{\xi}\int_{0}^{\gamma}(\xi-\eta)^{-\alpha-1}(\gamma-\eta)^{-\alpha}|A(Y(s,\gamma))|d\eta d\gamma \nonumber
\\&= \int_{0}^{\xi}|A(Y(s,\gamma))|\left(\int_{0}^{\gamma}(\xi-\eta)^{-\alpha-1}(\gamma-\eta)^{-\alpha}d\eta\right)d\gamma \nonumber
\\&= \int_{0}^{\xi}|A(Y(s,\gamma))|\left((\xi-\gamma)^{-2\alpha}\int_{0}^{\frac{\gamma}{\xi-\gamma}}(1+x)^{-\alpha-1}x^{-\alpha}dx\right)d\gamma \nonumber
\\&\leq \int_{0}^{\xi}|A(Y(s,\gamma))|\left((\xi-\gamma)^{-2\alpha}\int_{0}^{\infty}(1+x)^{-\alpha-1}x^{-\alpha}dx\right)d\gamma \nonumber
\\&\leq \frac{M_{2}b^{(1)}_{\alpha}}{1-2\alpha}\xi^{1-2\alpha},
\end{align}

\noindent where $b^{(1)}_{\alpha} = B(2\alpha,1-\alpha)$ and $B(p,q)$ is the beta function given by 
\begin{equation*}
B(p,q)=\int_{0}^{1}\frac{x^{q-1}}{(1+x)^{p+q}}dx.
\end{equation*}

%

\noindent Now consider the second part in \eqref{estimate_3}. We see that

\begin{align}\label{estimate_5}
&\int_{0}^{\xi}\frac{1}{(\xi-\eta)^{\alpha+1}}\int_{\eta}^{\xi}\int_{\eta}^{\gamma} \frac{|Y(s,\gamma)-Y(s,\delta)|}{(\gamma-\delta)^{\alpha+1}}d\delta d\gamma d\eta \nonumber
\\&= \int_{0}^{\xi}\int_{\eta}^{\xi}\int_{\eta}^{\gamma}\frac{|Y(s,\gamma)-Y(s,\delta)|}{(\xi-\eta)^{\alpha+1}(\gamma-\delta)^{\alpha+1}}d\delta d\gamma d\eta \nonumber
\\&= \int_{0}^{\xi}\int_{0}^{\gamma}\int_{\eta}^{\gamma}\frac{|Y(s,\gamma)-Y(s,\delta)|}{(\xi-\eta)^{\alpha+1}(\gamma-\delta)^{\alpha+1}}d\delta d\eta d\gamma \nonumber
\\&= \int_{0}^{\xi}\int_{0}^{\gamma}\int_{0}^{\delta}\frac{|Y(s,\gamma)-Y(s,\delta)|}{(\xi-\eta)^{\alpha+1}(\gamma-\delta)^{\alpha+1}}d\eta d\delta d\gamma \nonumber
\\&= \int_{0}^{\xi}\int_{0}^{\gamma}\frac{|Y(s,\gamma)-Y(s,\delta)|}{(\gamma-\delta)^{\alpha+1}}\left(\int_{0}^{\delta}(\xi-\eta)^{-\alpha-1}d\eta\right) d\delta d\gamma \nonumber
\\&\leq \alpha^{-1}\int_{0}^{\xi}\int_{0}^{\gamma}\frac{|Y(s,\gamma)-Y(s,\delta)|}{(\gamma-\delta)^{\alpha+1}}(\xi-\gamma)^{-\alpha}d\delta d\gamma \nonumber
\\&\leq \alpha^{-1}\int_{0}^{\xi}(\xi-\gamma)^{-\alpha}\int_{0}^{\gamma}\frac{|Y(s,\gamma)-Y(s,\delta)|}{(\gamma-\delta)^{\alpha+1}}d\delta d\gamma \nonumber
\\&\leq \alpha^{-1}\int_{0}^{\xi}(\xi-\gamma)^{-\alpha}\sup_{\gamma\in[0,1],s\in[0,T]} \int_{0}^{\gamma}\frac{|Y(s,\gamma)-Y(s,\delta)|}{(\gamma-\delta)^{\alpha+1}}d\delta d\gamma \nonumber
\\&\leq \frac{||Y||_{\alpha,\infty}}{\alpha(1-\alpha)}\xi^{1-\alpha}.
\end{align}

\noindent Now using the estimates \eqref{estimate_1}, \eqref{estimate_2}, \eqref{estimate_3}, \eqref{estimate_4} and \eqref{estimate_5} in \eqref{alpha_infty_norm}, we get

\begin{align}
& ||F(Y)||_{\alpha,\infty} \nonumber
\\ &\leq \sup_{t\in[0,T],\xi\in[0,1]}\bigg[|\phi(\xi)| + \Lambda_{\alpha}(g)t\left(\frac{M_{2}}{1-\alpha}+M_{1}||Y||_{\alpha,\infty}\right) \nonumber
\\ &+ \int_{0}^{\xi}\frac{|\phi(\xi)-\phi(\eta)|}{(\xi-\eta)^{\alpha+1}}d\eta + \int_{0}^{t}\left(\frac{\Lambda_{\alpha}(g)M_{2}b^{(1)}_{\alpha}}{1-2\alpha}\xi^{1-2\alpha} + \frac{\Lambda_{\alpha}(g)M_{1}||Y||_{\alpha,\infty}}{\alpha(1-\alpha)}\xi^{1-\alpha}\right)ds\bigg]\nonumber
\\ &\leq \sup_{t\in[0,T],\xi\in[0,1]}\bigg[\sup_{\xi\in[0,1]}\left(|\phi(\xi)| + \int_{0}^{\xi}\frac{|\phi(\xi)-\phi(\eta)|}{(\xi-\eta)^{\alpha+1}}d\eta\right)\nonumber
\\&+ \Lambda_{\alpha}(g)t\left(\frac{M_{2}}{1-\alpha}+M_{1}||Y||_{\alpha,\infty}\right) + \frac{\Lambda_{\alpha}(g)M_{2}b^{(1)}_{\alpha}}{1-2\alpha}t\xi^{1-2\alpha} + \frac{\Lambda_{\alpha}(g)M_{1}||Y||_{\alpha,\infty}}{\alpha(1-\alpha)}t\xi^{1-\alpha}\bigg]\nonumber
\\ &\leq \sup_{t\in[0,T],\xi\in[0,1]}\bigg[||\phi||_{\alpha,\infty}+ \Lambda_{\alpha}(g)t\left(\frac{M_{2}}{1-\alpha}+M_{1}||Y||_{\alpha,\infty}\right) \nonumber
\\&+ \frac{\Lambda_{\alpha}(g)M_{2}b^{(1)}_{\alpha}}{1-2\alpha}t\xi^{1-2\alpha} + \frac{\Lambda_{\alpha}(g)M_{1}||Y||_{\alpha,\infty}}{\alpha(1-\alpha)}t\xi^{1-\alpha}\bigg]\nonumber
\\ &\leq ||\phi||_{\alpha,\infty}+ T\sup_{0\leq t\leq T} \Lambda_{\alpha}(g)\left[M_{2}\left(\frac{1}{1-\alpha}+ \frac{b^{(1)}_{\alpha}}{1-2\alpha}\right) + M_{1}\left(1+\frac{1}{\alpha(1-\alpha)}\right)||Y||_{\alpha,\infty}\right]\nonumber
\\ &\leq ||\phi||_{\alpha,\infty} + Tb^{(2)}_{\alpha}[1+||Y||_{\alpha,\infty}],\nonumber
\end{align}

\noindent where $b^{(2)}_{\alpha} = \left[M_{2}\left(\frac{1}{1-\alpha}+ \frac{b^{(1)}_{\alpha}}{1-2\alpha}\right)+M_{1}\left(1+\frac{1}{\alpha(1-\alpha)}\right)\right]\sup_{0\leq t\leq T} \Lambda_{\alpha}(g)$.\\

\noindent It is enough to choose $T_{1} = \frac{R_{1}-||\phi||_{\alpha,\infty}}{b^{(2)}_{\alpha}(1+R_{1})}$ with $R_{1}>||\phi||_{\alpha,\infty}$ to get that $F(B_{R_{1}})\subseteq B_{R_{1}}$. This completes the proof.


\end{proof}
\bigskip

\noindent Now in order to prove that the operator $F$ is a contraction, we will need the following two propositions.
\begin{proposition}\label{proposition1}
Let $f\in C([0,T],W_{0}^{\alpha,\infty}[0,1])$ and $g\in C([0,T], W_{0}^{1-\alpha,\infty}[0,1])$. Then, for all $\xi\in[0,1]$ and $t\in [0,T]$
\begin{align}
& \left|\int_{0}^{t}\int_{0}^{\xi}f(s,\gamma)dg_{\gamma}ds\right| \nonumber
\\&+ \int_{0}^{\xi}(\xi-\eta)^{-\alpha-1}\left|\int_{0}^{t}\int_{0}^{\xi}f(s,\gamma)dg_{\gamma}ds - \int_{0}^{t}\int_{0}^{\eta}f(s,\gamma)dg_{\gamma}ds\right|d\eta \nonumber
\\&\leq \sup_{0\leq t\leq T} \Lambda_{\alpha}(g)b_{\alpha}^{(3)}\int_{0}^{t}\int_{0}^{\xi}[(\xi-\gamma)^{-2\alpha}+\gamma^{-\alpha}]\bigg(|f(s,\gamma)|  \nonumber
\\&+ \int_{0}^{\gamma}\frac{|f(s,\gamma)-f(s,\delta)|}{(\gamma-\delta)^{\alpha+1}}d\delta\bigg)d\gamma ds.
\end{align}
\noindent where $b_{\alpha}^{(3)}$ is a constant which depends on $\alpha$, its  explicit expression is given below.
\end{proposition}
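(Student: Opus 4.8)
The plan is to bound each of the two terms on the left-hand side separately, in both cases first moving the outer time integral outside the absolute value by the triangle inequality, and then reducing to the pathwise Stieltjes estimates already established in Section 3. The whole argument parallels the computation in the proof of Lemma \ref{lemma1}, the only difference being that we do not invoke the boundedness or Lipschitz property of $A$; instead we keep the integrand $f$ general and carry $|f(s,\gamma)|$ together with its increments $|f(s,\gamma)-f(s,\delta)|$ throughout. Note first that, by the additivity of the generalized Stieltjes integral coming from \eqref{3.4.3}, we have $\int_{0}^{\xi}f\,dg_{\gamma}-\int_{0}^{\eta}f\,dg_{\gamma}=\int_{\eta}^{\xi}f\,dg_{\gamma}$, which is what lets us rewrite the difference inside the second term as a single integral over $(\eta,\xi)$.

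For the first term I would apply the basic bound \eqref{3.5.7} restricted to the interval $(0,\xi)$, obtaining
\begin{equation*}
\left|\int_{0}^{t}\int_{0}^{\xi}f(s,\gamma)\,dg_{\gamma}\,ds\right|\le \sup_{0\le t\le T}\Lambda_{\alpha}(g)\int_{0}^{t}\left(\int_{0}^{\xi}\frac{|f(s,\gamma)|}{\gamma^{\alpha}}\,d\gamma+\int_{0}^{\xi}\int_{0}^{\gamma}\frac{|f(s,\gamma)-f(s,\delta)|}{(\gamma-\delta)^{\alpha+1}}\,d\delta\,d\gamma\right)ds.
\end{equation*}
Since $\gamma\in[0,1]$ forces $\gamma^{-\alpha}\ge 1$, the increment integral is itself bounded by $\gamma^{-\alpha}\int_{0}^{\gamma}(\cdots)\,d\delta$, so this whole term is controlled by the $\gamma^{-\alpha}$-weighted part of the claimed right-hand side.

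The second term is the technical heart. After pulling out $\int_{0}^{t}$ and applying, on each interval $(\eta,\xi)$, the local form of the pathwise estimate exactly as in \eqref{estimate_3} (with $A(Y)$ replaced by $f$), I am left with two integrals weighted by $(\xi-\eta)^{-\alpha-1}$. For the first I would use Fubini and the substitution $\eta=\gamma-(\xi-\gamma)x$ as in \eqref{estimate_4}; the inner integral becomes $(\xi-\gamma)^{-2\alpha}\int_{0}^{\gamma/(\xi-\gamma)}(1+x)^{-\alpha-1}x^{-\alpha}\,dx\le b^{(1)}_{\alpha}(\xi-\gamma)^{-2\alpha}$, with $b^{(1)}_{\alpha}=B(2\alpha,1-\alpha)$, which produces the $(\xi-\gamma)^{-2\alpha}|f(s,\gamma)|$ contribution. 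For the second I would reorder the integrals as in \eqref{estimate_5}, compute $\int_{0}^{\delta}(\xi-\eta)^{-\alpha-1}\,d\eta\le \alpha^{-1}(\xi-\delta)^{-\alpha}\le \alpha^{-1}(\xi-\gamma)^{-\alpha}$ using $\delta\le\gamma$, and then invoke $(\xi-\gamma)^{-\alpha}\le(\xi-\gamma)^{-2\alpha}$, valid because $\xi-\gamma\le 1$; this yields the $(\xi-\gamma)^{-2\alpha}$-weighted increment contribution.

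Combining the two terms and collecting the constants $1$, $B(2\alpha,1-\alpha)$ and $\alpha^{-1}$ into a single $b^{(3)}_{\alpha}$ gives precisely the asserted inequality, because the $\gamma^{-\alpha}$ and $(\xi-\gamma)^{-2\alpha}$ weights assemble into the common factor $(\xi-\gamma)^{-2\alpha}+\gamma^{-\alpha}$ multiplying $|f(s,\gamma)|+\int_{0}^{\gamma}(\gamma-\delta)^{-\alpha-1}|f(s,\gamma)-f(s,\delta)|\,d\delta$. The main obstacle, as in Lemma \ref{lemma1}, is the careful bookkeeping of the singular kernels: justifying the Fubini interchanges and verifying finiteness of the beta-type integral $\int_{0}^{\infty}(1+x)^{-\alpha-1}x^{-\alpha}\,dx=B(2\alpha,1-\alpha)$, which holds for $0<\alpha<1$, together with the elementary monotonicity estimates $\gamma^{-\alpha}\ge 1$ and $(\xi-\gamma)^{-\alpha}\le(\xi-\gamma)^{-2\alpha}$ on $[0,1]$.
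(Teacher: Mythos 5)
Your proposal is correct and follows essentially the same route as the paper's own proof: rewrite the difference as $\int_{\eta}^{\xi}f\,dg_{\gamma}$ via additivity, bound it pathwise by $\Lambda_{\alpha}(g)$ times the Weyl-derivative expression, multiply by $(\xi-\eta)^{-\alpha-1}$ and integrate, then use Fubini, the substitution giving $b^{(1)}_{\alpha}=B(2\alpha,1-\alpha)$, and the elementary bounds $\gamma^{-\alpha}\geq 1$ and $(\xi-\gamma)^{-\alpha}\leq(\xi-\gamma)^{-2\alpha}$ to assemble the common weight $(\xi-\gamma)^{-2\alpha}+\gamma^{-\alpha}$. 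The only difference is bookkeeping of the factor $\alpha$ from the Weyl derivative (the paper cancels it against the $\alpha^{-1}$ arising from $\int_{0}^{\delta}(\xi-\eta)^{-\alpha-1}d\eta$, while you drop it early and absorb $\alpha^{-1}$ into $b^{(3)}_{\alpha}$), which only changes the explicit value of the constant, not the validity of the statement.
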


\begin{proof}

\noindent Let $0\leq\eta<\xi\leq 1$. Then,

\begin{align}\label{part1}
& \left|\int_{0}^{t}\int_{0}^{\xi}f(s,\gamma)dg_{\gamma}ds-\int_{0}^{t}\int_{0}^{\eta}f(s,\gamma)dg_{\gamma}ds\right|\nonumber
\\ &= \left|\int_{0}^{t}\int_{\eta}^{\xi}f(s,\gamma)dg_{\gamma}ds\right|\nonumber
\\ &= \int_{0}^{t}\left|\int_{\eta}^{\xi}f(s,\gamma)dg_{\gamma}\right|ds\nonumber
\\ &= \int_{0}^{t}|D^{\alpha}_{\eta+}f(s,\gamma)D^{1-\alpha}_{\xi-}g_{\xi-}(s,\gamma)d\gamma| \nonumber
\\ &\leq \sup_{0\leq t\leq T} \Lambda_{\alpha}(g)\int_{0}^{t}\left(\int_{\eta}^{\xi}\frac{|f(s,\gamma)|}{(\gamma-\eta)^{\alpha}}d\gamma + \alpha\int_{\eta}^{\xi}\int_{\eta}^{\gamma}\frac{|f(s,\gamma)-f(s,\delta)|}{(\gamma-\delta)^{\alpha+1}}d\delta d\gamma\right)ds.
\end{align}

\noindent
We multiply \eqref{part1} by $(\xi-\eta)^{-1-\alpha}$, integrate from $0$ to $\xi$, use Fubini's theorem and we get

\begin{align}\label{part2}
& \int_{0}^{\xi}\frac{1}{(\xi-\eta)^{\alpha+1}}\left|\int_{0}^{t}\int_{0}^{\xi}f(s,\gamma)dg_{\gamma}ds- \int_{0}^{t}\int_{0}^{\eta}f(s,\gamma)dg_{\gamma}ds\right|d\eta\nonumber
\\ &\leq \sup_{0\leq t\leq T}\Lambda_{\alpha}(g)\int_{0}^{t}\int_{0}^{\xi}\frac{1}{(\xi-\eta)^{\alpha+1}} \bigg(\int_{\eta}^{\xi}\frac{|f(s,\gamma)|}{(\gamma-\eta)^{\alpha}}d\gamma \nonumber
\\ & + \alpha\int_{\eta}^{\xi}\int_{\eta}^{\gamma}\frac{|f(s,\gamma)-f(s,\delta)|}{(\gamma-\delta)^{\alpha+1}}d\delta d\gamma\bigg)d\eta ds\nonumber
\\ &\leq \sup_{0\leq t\leq T}\Lambda_{\alpha}(g)\bigg(\int_{0}^{t}\int_{0}^{\xi}\int_{\eta}^{\xi}\frac{|f(s,\gamma)|}{(\xi-\eta)^{\alpha+1}(\gamma-\eta)^{\alpha}}d\gamma d\eta ds \nonumber
\\ & + \alpha\int_{0}^{t}\int_{0}^{\xi}\int_{\eta}^{\xi}\int_{\eta}^{\gamma}\frac{|f(s,\gamma)-f(s,\delta)|}{(\xi-\eta)^{\alpha+1} (\gamma-\delta)^{\alpha+1}} d\delta d\gamma d\eta ds\bigg).
\end{align}

\noindent Consider the first term on the right side of \eqref{part2}:

\begin{align}\label{part3}
& \int_{0}^{\xi}\int_{\eta}^{\xi}(\xi-\eta)^{-\alpha-1}(\gamma-\eta)^{-\alpha}|f(s,\gamma)|d\gamma d\eta \nonumber
\\&= \int_{0}^{\xi}\int_{0}^{\gamma}(\xi-\eta)^{-\alpha-1}(\gamma-\eta)^{-\alpha}|f(s,\gamma)|d\eta d\gamma \nonumber
\\&= \int_{0}^{\xi}|f(s,\gamma)|\left(\int_{0}^{\gamma}(\xi-\eta)^{-\alpha-1}(\gamma-\eta)^{-\alpha}d\eta\right)d\gamma \nonumber
\\& \text{using again the same substitution used in \eqref{estimate_3}, we get }\nonumber
\\&\leq b^{(1)}_{\alpha}\int_{0}^{\xi}|f(s,\gamma)|(\xi-\gamma)^{-2\alpha}d\gamma.
\end{align}


\noindent Now, consider the second term on the right side of \eqref{part2}:

\begin{align}\label{part4}
& \int_{0}^{\xi}\int_{\eta}^{\xi}\int_{\eta}^{\gamma}\frac{|f(s,\gamma)-f(s,\delta)|}{(\xi-\eta)^{\alpha+1}(\gamma-\delta)^{\alpha+1}}d\delta d\gamma d\eta \nonumber
\\&= \int_{0}^{\xi}\int_{0}^{\gamma}\int_{\eta}^{\gamma}\frac{|f(s,\gamma)-f(s,\delta)|}{(\xi-\eta)^{\alpha+1}(\gamma-\delta)^{\alpha+1}}d\delta d\eta d\gamma \nonumber
\\&= \int_{0}^{\xi}\int_{0}^{\gamma}\int_{0}^{\delta}\frac{|f(s,\gamma)-f(s,\delta)|}{(\xi-\eta)^{\alpha+1}(\gamma-\delta)^{\alpha+1}}d\eta d\delta d\gamma \nonumber
\\&= \int_{0}^{\xi}\int_{0}^{\gamma}\frac{|f(s,\gamma)-f(s,\delta)|}{(\gamma-\delta)^{\alpha+1}}\left[\int_{0}^{\delta}(\xi-\eta)^{-\alpha-1}d\eta\right] d\delta d\gamma \nonumber
\\&\leq \alpha^{-1}\int_{0}^{\xi}\int_{0}^{\gamma}\frac{|f(s,\gamma)-f(s,\delta)|}{(\gamma-\delta)^{\alpha+1}}(\xi-\gamma)^{-\alpha}d\delta d\gamma \nonumber
\\&\leq \alpha^{-1}\int_{0}^{\xi}(\xi-\gamma)^{-\alpha}\int_{0}^{\gamma}\frac{|f(s,\gamma)-f(s,\delta)|}{(\gamma-\delta)^{\alpha+1}}d\delta d\gamma.
\end{align}

\noindent Using \eqref{part2} \eqref{part3} and \eqref{part4}, we obtain

\begin{align}\label{part5}
& \int_{0}^{\xi}\frac{1}{(\xi-\eta)^{\alpha+1}}\left|\int_{0}^{t}\int_{0}^{\xi}f(s,\gamma)dg_{\gamma}ds- \int_{0}^{t}\int_{0}^{\eta}f(s,\gamma)dg_{\gamma}ds\right|d\eta\nonumber
\\ & \leq \sup_{0\leq t\leq T}\Lambda_{\alpha}(g)\int_{0}^{t}\bigg(b^{(1)}_{\alpha}\int_{0}^{\xi}|f(s,\gamma)|(\xi-\gamma)^{-2\alpha}d\gamma \nonumber
\\ & + \int_{0}^{\xi}(\xi-\gamma)^{-\alpha}\int_{0}^{\gamma}\frac{|f(s,\gamma)-f(s,\delta)|}{(\gamma-\delta)^{\alpha+1}}d\delta d\gamma\bigg)ds.
\end{align}

\noindent By \eqref{part1} we have

\begin{align}\label{part6}
& \left|\int_{0}^{t}\int_{0}^{\xi}f(s,\gamma)dg_{\gamma}ds\right|\nonumber
\\ & \leq \sup_{0\leq t\leq T}\Lambda_{\alpha}(g)\int_{0}^{t}\left(\int_{0}^{\xi}\frac{|f(s,\gamma)|}{\gamma^{\alpha}}d\gamma + \alpha\int_{0}^{\xi}\int_{0}^{\gamma}\frac{|f(s,\gamma)-f(s,\delta)|}{(\gamma-\delta)^{\alpha+1}}d\delta d\gamma\right)ds.
\end{align}

\noindent From \eqref{part5} and \eqref{part6}, we have

\begin{align}
& \left|\int_{0}^{t}\int_{0}^{\xi}f(s,\gamma)dg_{\gamma}ds\right| +
\int_{0}^{\xi}\frac{1}{(\xi-\eta)^{\alpha+1}}\left|\int_{0}^{t}\int_{0}^{\xi}f(s,\gamma)dg_{\gamma}ds- \int_{0}^{t}\int_{0}^{\eta}f(s,\gamma)dg_{\gamma}ds\right|d\eta\nonumber
\\ & \leq \sup_{0\leq t\leq T}\Lambda_{\alpha}(g)\int_{0}^{t}\bigg(b^{(1)}_{\alpha}\int_{0}^{\xi}\frac{|f(s,\gamma)|}{(\xi-\gamma)^{2\alpha}}d\gamma + \int_{0}^{\xi}\frac{|f(s,\gamma)|}{\gamma^{\alpha}}d\gamma \nonumber
\\ &+ \int_{0}^{\xi}[(\xi-\gamma)^{-\alpha}+\alpha]\int_{0}^{\gamma}\frac{|f(s,\gamma)-f(s,\delta)|}{(\gamma-\delta)^{\alpha+1}}d\delta d\gamma\bigg)ds\nonumber
\\& \text{using the fact that $\alpha<\gamma^{-\alpha}$ and $\frac{1}{(\xi-\gamma)^{\alpha}}<\frac{1}{(\xi-\gamma)^{2\alpha}}$ we get }
\\ & \leq \sup_{0\leq t\leq T}\Lambda_{\alpha}(g)\int_{0}^{t}\int_{0}^{\xi}\bigg([b^{(1)}_{\alpha}(\xi-\gamma)^{-2\alpha}+\gamma^{-\alpha}]|f(s,\gamma)|\nonumber
\\ &+ [(\xi-\gamma)^{-2\alpha}+\gamma^{-\alpha}]\int_{0}^{\gamma}\frac{|f(s,\gamma)-f(s,\delta)|}{(\gamma-\delta)^{\alpha+1}}d\delta\bigg)d\gamma ds\nonumber
\\ & \leq \sup_{0\leq t\leq T}\Lambda_{\alpha}(g)b^{(3)}_{\alpha}\int_{0}^{t}\int_{0}^{\xi}\bigg([(\xi-\gamma)^{-2\alpha}+\gamma^{-\alpha}]|f(s,\gamma)| \nonumber
\\ &+ [(\xi-\gamma)^{-2\alpha}+\gamma^{-\alpha}]\int_{0}^{\gamma}\frac{|f(s,\gamma)-f(s,\delta)|}{(\gamma-\delta)^{\alpha+1}}d\delta\bigg)d\gamma ds\nonumber
\\ & \leq \sup_{0\leq t\leq T}\Lambda_{\alpha}(g)b^{(3)}_{\alpha}\int_{0}^{t}\int_{0}^{\xi}[(\xi-\gamma)^{-2\alpha}+\gamma^{-\alpha}]\bigg(|f(s,\gamma)| + \int_{0}^{\gamma}\frac{|f(s,\gamma)-f(s,\delta)|}{(\gamma-\delta)^{\alpha+1}}d\delta\bigg)d\gamma ds,\nonumber
\end{align}
\noindent where $b^{(3)}_{\alpha}=\max\{1,b^{(1)}_{\alpha}\}$. This completes the proof.
\end{proof}

\bigskip

\begin{proposition}\label{proposition2}
Let $h:\mathbb{R}\rightarrow\mathbb{R}$ be a function satisfying the assumptions A3 and A4. Then for all $N>0$ and $|X_{1}|,|X_{2}|,|X_{3}|,|X_{4}| \leq N$ for all $X_{1},X_{2},X_{3},X_{4}\in\mathbb{R}$,
\begin{align}
& |h(X_{1})-h(X_{2})-h(X_{3})+h(X_{4})|\nonumber
\\ & \leq M_{1}|X_{1}-X_{2}-X_{3}+X_{4}| + M_{N}|X_{1}-X_{3}|(|X_{1}-X_{2}|+|X_{3}-X_{4}|).
\end{align}

\end{proposition}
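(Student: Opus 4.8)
The plan is to read $h(X_1)-h(X_2)-h(X_3)+h(X_4)$ as a discrete second difference and to linearize it by the fundamental theorem of calculus, which is legitimate since $h$ is differentiable (A1) and Lipschitz, hence absolutely continuous (A2). First I would regroup the four terms as $[h(X_1)-h(X_3)]-[h(X_2)-h(X_4)]$ and represent each first difference as an integral of $h'$ along the segment joining the two arguments:
\[
h(X_1)-h(X_3)=(X_1-X_3)\int_0^1 h'(P(t))\,dt,\qquad h(X_2)-h(X_4)=(X_2-X_4)\int_0^1 h'(Q(t))\,dt,
\]
where $P(t)=tX_1+(1-t)X_3$ and $Q(t)=tX_2+(1-t)X_4$. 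The crucial point is to parametrize both segments with the \emph{same} interpolation parameter $t$.

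The key algebraic step is to collapse the second difference into one clean quantity. Setting $\Delta:=X_1-X_2-X_3+X_4$, one checks that $(X_1-X_3)-(X_2-X_4)=\Delta$, so substituting $X_2-X_4=(X_1-X_3)-\Delta$ and rearranging yields
\[
h(X_1)-h(X_2)-h(X_3)+h(X_4)=(X_1-X_3)\int_0^1\big[h'(P(t))-h'(Q(t))\big]\,dt+\Delta\int_0^1 h'(Q(t))\,dt.
\]
This is the heart of the argument: the factor $X_1-X_3$ is now paired only with the difference $h'(P(t))-h'(Q(t))$, which A4 makes small, while the remaining term carries the full second-difference factor $\Delta$.

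It then remains to estimate the two integrals separately. For the second, differentiability together with the Lipschitz property A2 gives $|h'|\le M_1$, so $|\Delta|\int_0^1|h'(Q(t))|\,dt\le M_1|\Delta|=M_1|X_1-X_2-X_3+X_4|$. For the first, I would observe that $P(t)$ and $Q(t)$ are convex combinations of points of modulus at most $N$, whence $|P(t)|,|Q(t)|\le N$ and A4 applies, giving $|h'(P(t))-h'(Q(t))|\le M_N|P(t)-Q(t)|$. Since $P(t)-Q(t)=t(X_1-X_2)+(1-t)(X_3-X_4)$, the triangle inequality yields $|P(t)-Q(t)|\le|X_1-X_2|+|X_3-X_4|$ uniformly in $t\in[0,1]$; integrating and multiplying by $|X_1-X_3|$ produces the term $M_N|X_1-X_3|(|X_1-X_2|+|X_3-X_4|)$. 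Adding the two bounds gives the asserted inequality.

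The difficulty here is bookkeeping rather than analysis. The essential choices are to use a common parameter $t$ so that $P(t)-Q(t)$ reduces to the convex combination $t(X_1-X_2)+(1-t)(X_3-X_4)$, and to verify that the segments stay inside $[-N,N]$ so that A4 is genuinely applicable. A naive mean-value-theorem approach would instead produce two unrelated intermediate points whose separation cannot be controlled by $|X_1-X_2|+|X_3-X_4|$, so the integral representation is exactly what makes the estimate close. I note that, although the statement cites A3 and A4, the proof in fact uses differentiability (A1) and the Lipschitz bound $|h'|\le M_1$ (A2) in place of the boundedness A3; this is harmless in the intended application, where $h$ satisfies all of A1--A4.
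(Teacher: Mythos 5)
Your proof is correct and follows essentially the same route as the paper's: the integral form of the mean value theorem with a \emph{common} interpolation parameter, the identical regrouping that isolates $\Delta = X_{1}-X_{2}-X_{3}+X_{4}$ against $\int_{0}^{1}h'(\theta X_{2}+(1-\theta)X_{4})\,d\theta$, the bound $|h'|\leq M_{1}$ from A2 for that term, and A4 for the term carrying $X_{1}-X_{3}$. The only cosmetic difference is that the paper splits $h'(\theta X_{1}+(1-\theta)X_{3})-h'(\theta X_{2}+(1-\theta)X_{4})$ through an intermediate point and applies A4 twice (picking up factors $\int_{0}^{1}\theta\,d\theta$), whereas you apply A4 once to $P(t)-Q(t)=t(X_{1}-X_{2})+(1-t)(X_{3}-X_{4})$ and use the triangle inequality; your remark that the proof really uses A1, A2 and A4 rather than A3 is accurate and applies equally to the paper's own argument.
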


\begin{proof}

\noindent Using the mean value theorem, we get
\begin{eqnarray}
\frac{h(X_{1})-h(X_{3})}{X_{1}-X_{3}}\nonumber &=& h'(\theta X_{1}+(1-\theta)X_{3})\quad (0<\theta<1)\nonumber
\\ &=& h'(\beta)\quad \text{where} \quad \beta=\theta X_{1}+(1-\theta)X_{3},\quad X_{1}<\beta<X_{3}\nonumber
\\ &=& \frac{1}{X_{3}-X_{1}}\int_{X_{1}}^{X_{3}}h'(\gamma)d\gamma.\nonumber
\end{eqnarray}
\noindent Using the substitution $\gamma =  \theta X_{1}+(1-\theta)X_{3}$, we get
\begin{equation}\label{MVT1}
h(X_{1})-h(X_{3}) = (X_{1}-X_{3})\int_{0}^{1}h'(\theta X_{1}+(1-\theta)X_{3})d\theta.
\end{equation}
\noindent Similarly we get
\begin{equation}\label{MVT2}
h(X_{2})-h(X_{4}) = (X_{2}-X_{4})\int_{0}^{1}h'(\theta X_{2}+(1-\theta)X_{4})d\theta.
\end{equation}
\noindent By \eqref{MVT1} and \eqref{MVT2} we obtain
\begin{align}
& |h(X_{1})-h(X_{2})-h(X_{3})+h(X_{4})|\nonumber
\\ & = \left|(X_{1}-X_{3})\int_{0}^{1}h'(\theta X_{1}+(1-\theta)X_{3})d\theta - (X_{2}-X_{4})\int_{0}^{1}h'(\theta X_{2}+(1-\theta)X_{4})d\theta\right|\nonumber
\\ & = \int_{0}^{1}(X_{1}-X_{2}-X_{3}+X_{4})h'(\theta X_{2}+(1-\theta)X_{4})d\theta\nonumber
\\ & + \int_{0}^{1}(X_{1}-X_{3})[h'(\theta X_{1}+(1-\theta)X_{3}) - h'(\theta X_{2}+(1-\theta)X_{4})]d\theta\nonumber
\\ & \leq M_{1}|X_{1}-X_{2}-X_{3}+X_{4}| \nonumber
\\ & + |X_{1}-X_{3}|\int_{0}^{1}|[h'(\theta X_{1}+(1-\theta)X_{3}) - h'(\theta X_{2}+(1-\theta)X_{3})]d\theta \nonumber
\\ & + |X_{1}-X_{3}|\int_{0}^{1}|[h'(\theta X_{1}+(1-\theta)X_{3}) - h'(\theta X_{2}+(1-\theta)X_{4})]d\theta \nonumber
\\ & \leq M_{1}|X_{1}-X_{2}-X_{3}+X_{4}| + M_{N}|X_{1}-X_{3}|\bigg(|X_{1}-X_{2}|\int_{0}^{1}\theta d\theta \nonumber
\\&+ |X_{3}-X_{4}|\int_{0}^{1}\theta d\theta\bigg)\nonumber
\\ & \leq M_{1}|X_{1}-X_{2}-X_{3}+X_{4}| + M_{N}|X_{1}-X_{3}|(|X_{1}-X_{2}| + |X_{3}-X_{4}|).\nonumber
\end{align}
This completes the proof.
\end{proof}


\begin{lemma}\label{lemma2}
Given a positive constant $R_{2}>\|\phi\|_{\alpha,\infty}$, there exists $T_{2}>0$  and a constant
$0<C<1$ such that,
\begin{equation*}
||F(Y_{1})-F(Y_{2})||_{\alpha,\infty}\leq C||Y_{1}-Y_{2}||_{\alpha,\infty}
\end{equation*}
\noindent for all $Y_{1},Y_{2}\in B_{R_{2},T_{2}}$.
\end{lemma}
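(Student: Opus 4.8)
The plan is to exploit that the initial-condition terms cancel, reducing everything to Proposition \ref{proposition1} applied to the difference $f(s,\gamma) := A(Y_1(s,\gamma)) - A(Y_2(s,\gamma))$. Subtracting the two copies of \eqref{operator} gives $F(Y_1(t,\xi)) - F(Y_2(t,\xi)) = \int_0^t \int_0^\xi f(s,\gamma)\,dg_\gamma\,ds$, so that the full $\|\cdot\|_{\alpha,\infty}$-norm of the difference, consisting of the supremum term and the H\"older-difference integral, is exactly the left-hand side appearing in Proposition \ref{proposition1}. Since $Y_1,Y_2 \in B_{R_2,T_2}$ and $A$ is differentiable with bounded Lipschitz data, $f$ inherits the regularity needed to invoke the proposition, which yields
\[
\|F(Y_1)-F(Y_2)\|_{\alpha,\infty} \leq \sup_{0\le t\le T}\Lambda_\alpha(g)\,b^{(3)}_\alpha \sup_{t,\xi}\int_0^t\!\!\int_0^\xi [(\xi-\gamma)^{-2\alpha}+\gamma^{-\alpha}]\Big(|f(s,\gamma)| + \int_0^\gamma \tfrac{|f(s,\gamma)-f(s,\delta)|}{(\gamma-\delta)^{\alpha+1}}\,d\delta\Big)\,d\gamma\,ds.
\]
The remaining task is to bound the inner bracket by a multiple of $\|Y_1-Y_2\|_{\alpha,\infty}$.

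The pointwise term is immediate from assumption A2: $|f(s,\gamma)| \le M_1 |Y_1(s,\gamma)-Y_2(s,\gamma)| \le M_1\|Y_1-Y_2\|_{\alpha,\infty}$. The H\"older-difference term is the crux. Writing $f(s,\gamma)-f(s,\delta) = A(Y_1(s,\gamma)) - A(Y_2(s,\gamma)) - A(Y_1(s,\delta)) + A(Y_2(s,\delta))$, I would apply Proposition \ref{proposition2} with $h=A$ and $X_1=Y_1(s,\gamma)$, $X_2=Y_2(s,\gamma)$, $X_3=Y_1(s,\delta)$, $X_4=Y_2(s,\delta)$. This is where membership in the ball $B_{R_2,T_2}$ is essential: since $\|Y_i\|_{\alpha,\infty}\le R_2$ forces $|Y_i(s,\gamma)| \le R_2$ uniformly, I may take $N=R_2$ in Proposition \ref{proposition2}, producing the constant $M_{R_2}$. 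The proposition bounds $|f(s,\gamma)-f(s,\delta)|$ by $M_1\,|(Y_1-Y_2)(s,\gamma) - (Y_1-Y_2)(s,\delta)|$ plus a cross term $M_{R_2}\,|Y_1(s,\gamma)-Y_1(s,\delta)|\,(|(Y_1-Y_2)(s,\gamma)| + |(Y_1-Y_2)(s,\delta)|)$.

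After dividing by $(\gamma-\delta)^{\alpha+1}$ and integrating in $\delta$, the first piece gives $M_1\|Y_1-Y_2\|_{\alpha,\infty}$ directly from the definition of the norm. For the cross term I bound the factors $|(Y_1-Y_2)(s,\cdot)|$ by $\|Y_1-Y_2\|_{\alpha,\infty}$ and pull them out, leaving $2M_{R_2}\|Y_1-Y_2\|_{\alpha,\infty}\int_0^\gamma \frac{|Y_1(s,\gamma)-Y_1(s,\delta)|}{(\gamma-\delta)^{\alpha+1}}\,d\delta \le 2M_{R_2}R_2\,\|Y_1-Y_2\|_{\alpha,\infty}$, using $\|Y_1\|_{\alpha,\infty}\le R_2$. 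Collecting, the inner bracket is at most $(2M_1 + 2M_{R_2}R_2)\|Y_1-Y_2\|_{\alpha,\infty}$. Finally I evaluate the elementary integral $\int_0^\xi[(\xi-\gamma)^{-2\alpha}+\gamma^{-\alpha}]\,d\gamma = \frac{\xi^{1-2\alpha}}{1-2\alpha}+\frac{\xi^{1-\alpha}}{1-\alpha} \le \frac{1}{1-2\alpha}+\frac{1}{1-\alpha}$, finite because $\alpha<\tfrac12$, so that the $s$-integration contributes a factor $t\le T_2$. This produces $\|F(Y_1)-F(Y_2)\|_{\alpha,\infty} \le C\,\|Y_1-Y_2\|_{\alpha,\infty}$ with $C = T_2\,\big(\sup_{0\le t\le T}\Lambda_\alpha(g)\big)\,b^{(3)}_\alpha\,(2M_1+2M_{R_2}R_2)\big(\tfrac{1}{1-2\alpha}+\tfrac{1}{1-\alpha}\big)$; choosing $T_2$ small enough makes $C<1$.

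The main obstacle is precisely the cross term in the H\"older-difference estimate: it is here that one needs local rather than global control of $A'$ through assumption A4, and it is the built-in boundedness of $B_{R_2,T_2}$ that permits invoking A4 at the fixed radius $N=R_2$, keeping the constant $M_{R_2}$ uniform over the ball. Everything else is a bookkeeping of the same integrals already carried out in the proofs of Lemma \ref{lemma1} and Proposition \ref{proposition1}.
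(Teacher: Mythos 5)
Your proof is correct and takes essentially the same route as the paper: cancel $\phi$, apply Proposition \ref{proposition1} to $f=A(Y_{1})-A(Y_{2})$, use A2 for the pointwise term and Proposition \ref{proposition2} with $N=R_{2}$ for the second-difference term (the ball membership justifying both the choice of $N$ and the absorption of one factor by $R_{2}$), and then shrink $T_{2}$. The only cosmetic difference is the assignment of the $X_{i}$ in Proposition \ref{proposition2}: your cross term is $M_{R_{2}}|Y_{1}(s,\gamma)-Y_{1}(s,\delta)|\left(|(Y_{1}-Y_{2})(s,\gamma)|+|(Y_{1}-Y_{2})(s,\delta)|\right)$ while the paper's is $M_{N}|Y_{1}(s,\gamma)-Y_{2}(s,\gamma)|\left(|Y_{1}(s,\gamma)-Y_{1}(s,\eta)|+|Y_{2}(s,\gamma)-Y_{2}(s,\eta)|\right)$, a symmetric permutation that leads to the same $2R_{2}\|Y_{1}-Y_{2}\|_{\alpha,\infty}$ bound.
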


\begin{proof}
Recall $F(Y(t,\xi))=\phi(\xi) + \int_{0}^{t}\int_{0}^{\xi}A(Y(s,\gamma))dg_{\gamma}ds$. Then,
\begin{equation*}
F(Y_{1}(t,\xi))-F(Y_{2}(t,\xi))=\int_{0}^{t}\int_{0}^{\xi}[A(Y_{1}(s,\gamma))-A(Y_{2}(s,\gamma))]dg_{\gamma}ds.
\end{equation*}

\noindent Using the expression \eqref{alpha_infty_norm} and Proposition \ref{proposition1}, we have that

\begin{align}
& ||F(Y_{1})-F(Y_{2})||_{\alpha,\infty} \nonumber
\\&\leq \sup_{0\leq t\leq T}\Lambda_{\alpha}(g) b^{(1)}_{\alpha}\sup_{t\in[0,T],\xi\in[0,1]}\int_{0}^{t}\int_{0}^{\xi}[(\xi-\gamma)^{-2\alpha}+\gamma^{-\alpha}]\bigg[|A(Y_{1}(s,\gamma))
-A(Y_{2}(s,\gamma))| \nonumber
\\&+ \int_{0}^{\gamma}\frac{|A(Y_{1}(s,\gamma))
-A(Y_{2}(s,\gamma))-\{A(Y_{1}(s,\eta))
-A(Y_{2}(s,\eta))\}|}{(\gamma-\eta)^{\alpha+1}}d\eta\bigg]d\gamma ds \nonumber
\\&\leq \sup_{0\leq t\leq T}\Lambda_{\alpha}(g) b^{(1)}_{\alpha}\sup_{t\in[0,T],\xi\in[0,1]}\int_{0}^{t}\int_{0}^{\xi}[(\xi-\gamma)^{-2\alpha}+\gamma^{-\alpha}]\bigg[M_{1}|(Y_{1}(s,\gamma))
-(Y_{2}(s,\gamma))| \nonumber
\\&+ \int_{0}^{\gamma}\frac{|A(Y_{1}(s,\gamma))
-A(Y_{2}(s,\gamma))-A(Y_{1}(s,\eta))
+A(Y_{2}(s,\eta))|}{(\gamma-\eta)^{\alpha+1}}d\eta\bigg]d\gamma ds. \nonumber
\end{align}

\noindent By Proposition \ref{proposition2} we get

\begin{align}
& ||F(Y_{1})-F(Y_{2})||_{\alpha,\infty} \nonumber
\\&\leq \sup_{0\leq t\leq T}\Lambda_{\alpha}(g) b^{(1)}_{\alpha}\sup_{t\in[0,T],\xi\in[0,1]}\int_{0}^{t}\int_{0}^{\xi}[(\xi-\gamma)^{-2\alpha}+\gamma^{-\alpha}]
\bigg[M_{1}|Y_{1}(s,\gamma)-Y_{2}(s,\gamma)| \nonumber
\\&+ M_{1}\int_{0}^{\gamma}\frac{|Y_{1}(s,\gamma)-Y_{2}(s,\gamma)-Y_{1}(s,\eta)
+Y_{2}(s,\eta)|}{(\gamma-\eta)^{\alpha+1}}d\eta \nonumber
\\&+M_{N}|Y_{1}(s,\gamma)-Y_{2}(s,\gamma)|\nonumber
\\& \times\bigg(\int_{0}^{\gamma}\frac{|Y_{1}(s,\gamma)-Y_{1}(s,\eta)|}{(\gamma-\eta)^{\alpha+1}}d\eta + \int_{0}^{\gamma}\frac{|Y_{2}(s,\gamma)-Y_{2}(s,\eta)|}{(\gamma-\eta)^{\alpha+1}}d\eta\bigg)\bigg]d\gamma ds. \nonumber
\end{align}

\noindent Using the notation
\begin{equation*}
\Delta(Y_{j})=\sup_{s\in[0,T],\gamma\in[0,1]}\int_{0}^{\gamma}\frac{|Y_{j}(s,\gamma)-Y_{j}(s,\eta)|}{(\gamma-\eta)^{\alpha+1}}d\eta,\quad j\in\{1,2\},
\end{equation*}
\noindent we get

\begin{align}
& ||F(Y_{1})-F(Y_{2})||_{\alpha,\infty} \nonumber
\\&\leq \sup_{0\leq t\leq T}\Lambda_{\alpha}(g) b^{(1)}_{\alpha}\sup_{t\in[0,T],\xi\in[0,1]}\int_{0}^{t}\int_{0}^{\xi}[(\xi-\gamma)^{-2\alpha}+\gamma^{-\alpha}]\bigg[M_{1}|Y_{1}(s,\gamma)
-Y_{2}(s,\gamma)| \nonumber
\\&+ M_{1}\int_{0}^{\gamma}\frac{|Y_{1}(s,\gamma)-Y_{2}(s,\gamma)-Y_{1}(s,\eta)+Y_{2}(s,\eta)|}{(\gamma-\eta)^{\alpha+1}}d\eta \nonumber
\\&+ M_{N}|Y_{1}(s,\gamma)-Y_{2}(s,\gamma)|(\Delta(Y_{1})+\Delta(Y_{2}))\bigg]d\gamma ds \nonumber
\\&\leq \sup_{0\leq t\leq T}\Lambda_{\alpha}(g) b^{(1)}_{\alpha}b^{(4)}\sup_{t\in[0,T],\xi\in[0,1]}\int_{0}^{t}\int_{0}^{\xi}[(\xi-\gamma)^{-2\alpha}+\gamma^{-\alpha}] \bigg[|Y_{1}(s,\gamma)-Y_{2}(s,\gamma)| \nonumber
\\&+ \int_{0}^{\gamma}\frac{|Y_{1}(s,\gamma)-Y_{2}(s,\gamma)-Y_{1}(s,\eta)+Y_{2}(s,\eta)|}{(\gamma-\eta)^{\alpha+1}}d\eta\bigg]d\gamma ds, \nonumber
\end{align}

\noindent where $b^{(4)} = (M_{1}+M_{{R_{1}}})(1+2R_{1})$. Hence,

\begin{align}
& ||F(Y_{1})-F(Y_{2})||_{\alpha,\infty} \nonumber
\\&\leq \sup_{0\leq t\leq T}\Lambda_{\alpha}(g) b^{(1)}_{\alpha}b^{(4)}\sup_{t\in[0,T],\xi\in[0,1]}\int_{0}^{t}\int_{0}^{\xi}[(\xi-\gamma)^{-2\alpha}+\gamma^{-\alpha}]\nonumber
\\&\times \sup_{t\in[0,T],\xi\in[0,1]}\bigg(|Y_{1}(s,\gamma)-Y_{2}(s,\gamma)|\nonumber
\\&\times \int_{0}^{\gamma} \frac{|Y_{1}(s,\gamma)-Y_{2}(s,\gamma)-\{Y_{1}(s,\eta)-Y_{2}(s,\eta)\}|}{(\gamma-\eta)^{\alpha+1}}d\eta\bigg)d\gamma ds \nonumber
\\&= \sup_{0\leq t\leq T}\Lambda_{\alpha}(g) b^{(1)}_{\alpha}b^{(4)}\sup_{t\in[0,T],\xi\in[0,1]}\int_{0}^{t}\int_{0}^{\xi}[(\xi-\gamma)^{-2\alpha}+ \gamma^{-\alpha}]||Y_{1}(s,\gamma)-Y_{2}(s,\gamma)||_{\alpha,\infty}d\gamma ds \nonumber
\\&= \sup_{0\leq t\leq T}\Lambda_{\alpha}(g) b^{(1)}_{\alpha}b^{(4)}||Y_{1}-Y_{2}||_{\alpha,\infty}\sup_{t\in[0,T],\xi\in[0,1]} \left(\frac{\xi^{1-2\alpha}}{1-2\alpha}+\frac{\xi^{1-\alpha}}{1-\alpha}\right)t \nonumber
\\&\leq \sup_{0\leq t\leq T}\Lambda_{\alpha}(g) b^{(1)}_{\alpha}b^{(4)}\frac{2-3\alpha}{(1-2\alpha)(1-\alpha)}T||Y_{1}-Y_{2}||_{\alpha,\infty} \nonumber
\\&= b^{(5)}T||Y_{1}-Y_{2}||_{\alpha,\infty}, \nonumber
\end{align}

\noindent where $b^{(5)} = b^{(1)}_{\alpha}b^{(4)}\frac{2-3\alpha}{(1-2\alpha)(1-\alpha)}$.\\

\noindent Choose $T_{2}$ such that $b^{(5)}T_{2} = C < 1$. This completes the proof.
%
\end{proof}

\noindent Now, we are able to state the following theorem:


\begin{theorem}\label{theorem_local}
Let $0<\alpha<\frac{1}{2}$, $g\in C([0,T], W_{0}^{1-\alpha,\infty}[0,1])$. Consider the integrodifferential equation
\begin{equation}\label{PDEg}
Y(t,\xi) = \phi(\xi) + \int_{0}^{t}\int_{0}^{\xi}A(Y(s,\eta))dg(s,\eta)ds
\end{equation}
\noindent where $t \in [0,T]$, $\xi \in [0,1]$. Assume that $A$ satisfies assumptions A1, A2, A3, and A4 and that
$\phi \in W^{\alpha,\infty}([0,1])$. Then, there exists $T_{0}>0$ such that the above equation has a unique solution
\begin{center}
$Y \in C([0,T],W^{\alpha,\infty}[0,1])$,
\end{center}
for all $T\leq T_{0}$.

\end{theorem}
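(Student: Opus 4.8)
The plan is to prove the theorem by a direct application of the Banach fixed point theorem to the operator $F$ defined in \eqref{operator}, with Lemmas \ref{lemma1} and \ref{lemma2} supplying the two ingredients. The first observation is that $Y$ solves \eqref{PDEg} on $[0,T]$ if and only if $Y$ is a fixed point of $F$, since the identity $F(Y)=Y$ is exactly the integral equation \eqref{PDEg}. Thus it suffices to produce a unique fixed point of $F$ in a suitable complete metric space and to record that it has the required regularity $Y\in C([0,T],W^{\alpha,\infty}[0,1])$.

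Second, I would fix a radius $R>\|\phi\|_{\alpha,\infty}$ once and for all. Applying Lemma \ref{lemma1} with this $R$ yields a time $T_{1}>0$ for which $F(B_{R,T_{1}})\subseteq B_{R,T_{1}}$, so $F$ maps the closed ball into itself. Applying Lemma \ref{lemma2} with the same $R$ yields a time $T_{2}>0$ and a constant $0<C<1$ for which $F$ is a strict contraction on $B_{R,T_{2}}$ with respect to $\|\cdot\|_{\alpha,\infty}$. Setting $T_{0}=\min\{T_{1},T_{2}\}$, both properties hold simultaneously: $F$ is a contraction of the complete ball $B_{R,T_{0}}$ into itself.

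Third, since $C([0,T_{0}],W^{\alpha,\infty}[0,1])$ equipped with $\|\cdot\|_{\alpha,\infty}$ is a Banach space and $B_{R,T_{0}}$ is a closed, hence complete, subset, the Banach fixed point theorem produces a unique $Y\in B_{R,T_{0}}$ with $F(Y)=Y$. By the first step this $Y$ is the desired solution, and restricting to $[0,T]$ for any $T\le T_{0}$ gives the solution on the shorter interval; continuity in time and the finiteness of $\|Y\|_{\alpha,\infty}$ are built into membership in the ball.

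The place that needs care, rather than a deep obstacle, is the bookkeeping that ties the two lemmas together: the times $T_{1}$, $T_{2}$ and the contraction constant $C$ all depend on the chosen radius $R$ (through the constants $b^{(2)}_{\alpha}$ and $b^{(5)}$), so one must commit to a single $R$ before extracting $T_{0}=\min\{T_{1},T_{2}\}$. I would also be explicit that the contraction principle a priori gives uniqueness only within the ball $B_{R,T_{0}}$; to upgrade this to uniqueness in all of $C([0,T_{0}],W^{\alpha,\infty}[0,1])$ one notes that any solution is continuous and therefore bounded in the $\alpha,\infty$ norm, so it lies in $B_{R',T_{0}}$ for some $R'$, and then comparing two solutions on a sufficiently short subinterval, where the estimate of Lemma \ref{lemma2} forces them to coincide, and iterating over a finite partition of $[0,T_{0}]$ yields uniqueness on the whole interval.
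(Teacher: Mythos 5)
Your proposal is correct and takes essentially the same route as the paper: the paper's proof likewise sets $T_{0}=\min\{T_{1},T_{2}\}$ and invokes Lemma \ref{lemma1} and Lemma \ref{lemma2} to conclude that $F$ is a contraction of a ball into itself, whence the fixed point is the solution. If anything, your version is more careful than the paper's one-line argument---you commit to a single radius $R$ before extracting both times (the paper takes $R=\min\{R_{1},R_{2}\}$, and the invariance $F(B_{R_{1},T_{1}})\subseteq B_{R_{1},T_{1}}$ of Lemma \ref{lemma1} does not literally transfer to a smaller ball), and you address uniqueness in all of $C([0,T_{0}],W^{\alpha,\infty}[0,1])$ rather than only within the ball, a point the paper leaves implicit.
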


\begin{proof}  Choose $T_{0} = \min\{T_{1},T_{2}\}$, and $R=\min\{R_{1},R_{2}\}$.
Then, using Lemma \ref{lemma1} and Lemma \ref{lemma2} the operator $F$
is a contraction on $B_{R,T}$ for all $T\leq T_{0}$ and this completes the proof.
\end{proof}

\noindent Now, we can show that the solution of \eqref{PDEg} is global in time.\\

\begin{theorem}\label{theorem_global}
Let $1-H<\alpha<\frac{1}{2}$, $g\in C([0,T],W^{1-\alpha,\infty,0}[0,1])$. Assume that
$A$ satisfies assumptions A1, A2, A3, and A4 and that $\phi \in W^{\alpha,\infty}([0,1])$.
Then for all $T>0$, there exists a unique $Y \in C([0,T],W^{\alpha,\infty}[0,1])$ solution of \eqref{PDEg}.

\end{theorem}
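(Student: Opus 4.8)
The plan is to upgrade the local solution furnished by Theorem \ref{theorem_local} to a global one by means of an a priori estimate that is \emph{independent} of the local Lipschitz constant $M_N$ of A4, and then to iterate the local construction with a uniform time step. The guiding principle is the standard blow-up alternative: a local solution extends to $[0,T]$ unless its $\|\cdot\|_{\alpha,\infty}$ norm diverges in finite time, and the a priori bound is what rules this out.

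First I would establish the a priori estimate. For a solution $Y$ of \eqref{PDEg} on $[0,t]$, introduce the time-truncated quantity
\[
N(t):=\sup_{0\le s\le t}\sup_{\xi\in[0,1]}\left(|Y(s,\xi)|+\int_0^\xi\frac{|Y(s,\xi)-Y(s,\eta)|}{(\xi-\eta)^{\alpha+1}}\,d\eta\right).
\]
Applying Proposition \ref{proposition1} with $f=A(Y)$ to the identity $Y(t,\xi)-\phi(\xi)=\int_0^t\int_0^\xi A(Y(s,\gamma))\,dg_\gamma\,ds$, and reinstating $\phi$ by the triangle inequality, I would bound $|Y(t,\xi)|+\int_0^\xi(\xi-\eta)^{-\alpha-1}|Y(t,\xi)-Y(t,\eta)|\,d\eta$ by $\|\phi\|_{\alpha,\infty}$ plus the right-hand side of Proposition \ref{proposition1}. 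There, assumption A3 gives $|A(Y)|\le M_2$ and A2 gives $|A(Y(s,\gamma))-A(Y(s,\delta))|\le M_1|Y(s,\gamma)-Y(s,\delta)|$, so the inner seminorm is dominated by $N(s)$. Since $\alpha<\tfrac12$ the kernel is integrable, $\int_0^\xi[(\xi-\gamma)^{-2\alpha}+\gamma^{-\alpha}]\,d\gamma\le\frac{1}{1-2\alpha}+\frac{1}{1-\alpha}$ uniformly in $\xi\in[0,1]$, and I obtain the linear integral inequality
\[
N(t)\le\|\phi\|_{\alpha,\infty}+c\int_0^t\bigl(1+N(s)\bigr)\,ds,
\]
with $c$ depending only on $\alpha$, $M_1$, $M_2$ and $\sup_{0\le t\le T}\Lambda_\alpha(g)$. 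Gronwall's lemma then yields a finite bound $N(t)\le\Phi(t)$ on every $[0,T]$, and crucially $\Phi$ does not involve $M_N$.

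The central observation, and the main obstacle to a naive iteration, is that the contraction time $T_2$ of Lemma \ref{lemma2} shrinks as the radius grows, since $b^{(5)}$ depends on $b^{(4)}=(M_1+M_{R_1})(1+2R_1)$ and A4 supplies only a local Lipschitz constant $M_{R_1}$ for $A'$; thus restarting from ever-larger data could force the step sizes to zero before $[0,T]$ is exhausted. The a priori estimate removes this difficulty. Fixing $R^*:=\Phi(T)$ and working in balls of radius $R^*+1$, the constant $M_{R^*+1}$ becomes a single fixed number, so both $T_1$ (from Lemma \ref{lemma1}) and $T_2$ admit a common positive lower bound $\tau>0$ independent of the initial datum, provided that datum has norm $\le R^*$. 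I would then solve successively on $[0,\tau],[\tau,2\tau],\dots$, restarting each time from $Y(k\tau)$; by the a priori estimate $\|Y(k\tau)\|_{\alpha,\infty}\le\Phi(k\tau)\le R^*$, so every step has length at least $\tau$ and finitely many steps cover $[0,T]$.

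Uniqueness on $[0,T]$ then follows from the local uniqueness in Theorem \ref{theorem_local} by a connectedness argument: given two solutions $Y_1,Y_2$, the set $\{t:Y_1\equiv Y_2\text{ on }[0,t]\}$ is nonempty, closed by continuity of $Y_1-Y_2$, and open because the local contraction can be restarted from any common value, hence equals $[0,T]$. I expect the only delicate points to be the bookkeeping that keeps the a priori constant $c$ genuinely free of $M_N$ (so that $\Phi$, and therefore $R^*$, is determined \emph{before} the radius is chosen), and verifying at each restart that $A(Y)$ retains the regularity required for Proposition \ref{proposition1} to apply.
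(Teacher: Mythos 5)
Your proof follows the same route as the paper's: an a priori bound on $\|Y\|_{\alpha,\infty}$ that uses only A2 and A3 (hence is independent of the local constant $M_N$ from A4), closed by Gronwall's inequality, which rules out finite-time blow-up of the local solution from Theorem \ref{theorem_local}. The paper stops at the Gronwall bound and simply asserts that ``the local solution is global in time,'' whereas your uniform-time-step iteration (made possible precisely because the Gronwall bound fixes the radius, and with it $M_{R^*+1}$, in advance) and your open/closed uniqueness argument correctly supply the continuation details that the paper leaves implicit.
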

\begin{proof}


%

\noindent It is enough to get an estimate in $C([0,T],W^{\alpha,\infty}[0,1])$. We can write
\begin{equation}
||Y(t)||_{\alpha,\infty} = \sup_{\xi\in[0,1]}\left(Y(t,\xi)+\int_{0}^{\xi}\frac{|Y(t,\xi)-Y(t,\eta)|}{(\xi-\eta)^{\alpha+1}}d\eta\right).
\end{equation}

\noindent Consider the first term on the right side of the above equality,
\begin{equation*}
|Y(t,\xi)| \leq |\phi(\xi)| + \int_{0}^{t}\left|\int_{0}^{\xi}A(Y(s,\eta))dg(s,\eta)\right|ds.
\end{equation*}

\noindent From \eqref{estimate_1} we obtain
\begin{align}\label{estimate_global_1}
|Y(t,\xi)| \leq |\phi(\xi)| + \int_{0}^{t}\Lambda_{\alpha}(g)t\left(\frac{M_{2}}{1-\alpha}+M_{1}||Y(s)||_{\alpha,\infty}ds\right).
\end{align}

\noindent Consider the second term. We have
\begin{align*}
& \left|\int_{0}^{\xi}\frac{|Y(t,\xi)-Y(t,\eta)|}{(\xi-\eta)^{\alpha+1}}d\eta\right| \nonumber
\\& = \int_{0}^{\xi}\frac{1}{(\xi-\eta)^{\alpha+1}}\left|\phi(\xi)+\int_{0}^{t}\int_{0}^{\xi}A(Y(s,\gamma))dg_{\gamma}ds - \left(\phi(\eta)+\int_{0}^{t}\int_{0}^{\eta}A(Y(s,\gamma))dg_{\gamma}ds\right)\right|\nonumber
\\& \leq \int_{0}^{\xi}\frac{1}{(\xi-\eta)^{\alpha+1}}\left(|\phi(\xi)-\phi(\eta)|+\int_{0}^{t}\left|\int_{\eta}^{\xi}A(Y(s,\gamma))dg_{\gamma}\right|ds d\eta \right)\nonumber
\\& \leq \int_{0}^{\xi}\frac{|\phi(\xi)-\phi(\eta)|}{(\xi-\eta)^{\alpha+1}}d\eta + \int_{0}^{t}\int_{0}^{\xi}\frac{1}{(\xi-\eta)^{\alpha+1}}\left|\int_{\eta}^{\xi}A(Y(s,\gamma))dg_{\gamma}\right|d\eta ds\nonumber.
\end{align*}

\noindent From \eqref{estimate_4} and \eqref{estimate_5} we obtain

\begin{align}\label{estimate_global_2}
& \left|\int_{0}^{\xi}\frac{|Y(t,\xi)-Y(t,\eta)|}{(\xi-\eta)^{\alpha+1}}d\eta\right| \nonumber
\\& \leq \int_{0}^{\xi}\frac{|\phi(\xi)-\phi(\eta)|}{(\xi-\eta)^{\alpha+1}}d\eta + \Lambda_{\alpha}(g)\int_{0}^{t}\left(\frac{M_{2}b_{\alpha}^{(1)}}{1-2\alpha}\xi^{1-2\alpha}
+\frac{M_{1}}{\alpha(1-\alpha)}||Y(s)||_{\alpha,\infty}\xi^{1-\alpha}\right)ds.
\end{align}

\noindent By \eqref{estimate_global_1} and \eqref{estimate_global_2} we get,
\begin{align}
& ||Y(t)||_{\alpha,\infty} \nonumber
\\& \leq ||\phi||_{\alpha,\infty}+\Lambda_{\alpha}(g)\int_{0}^{t}\left(M_{2}\left(\frac{1}{1-\alpha}+ \frac{b_{\alpha}^{(1)}}{1-2\alpha}\right)
+M_{1}\left(1+\frac{1}{\alpha(1-\alpha)}\right)||Y(s)||_{\alpha,\infty}\right)ds.\nonumber
\end{align}

\noindent By Gronwall's inequality we obtain,
$$||Y||_{\alpha,\infty}\leq ||\phi||_{\alpha,\infty}\exp(KT),$$


\noindent where $K = \sup_{0\leq t\leq T}\Lambda_{\alpha}(g)\left[M_{2}\left(\frac{1}{1-\alpha}+ \frac{b_{\alpha}^{(1)}}{1-2\alpha}\right)
+M_{1}\left(1+\frac{1}{\alpha(1-\alpha)}\right)\right]$.\\

Hence, the local solution is global in time.
\end{proof}
%

\subsection{Proof of Theorem \ref{main_theorem}}\label{main_proof}

For every $t\in [0,T]$, the random variable $G=\frac{1}{\Gamma(1-\alpha)}\sup_{0<\eta<\xi<1}|(D^{1-\alpha}_{\xi-}B_{\xi-})(t,\eta)|$ has moments of all orders by Proposition \ref{expectation_lemma}. Hence, the pathwise integral $\int_{0}^{1}A(Y(t,\eta))dB_{\eta}(t)$ exists for $1-H<\alpha<\frac{1}{2}$ and the existence and uniqueness of solutions follows from Theorem \ref{theorem_global} which completes the proof.
\bigskip

\noindent {\bf Acknowledgments:} This work was partially
supported by NSF grant No. DMS 0608494.

\bigskip


\begin{thebibliography}{99}

\bibitem{AlMaNu01} Al\`{o}s, E., Mazet, O. and Nualart, D.: Stochastic calculus with respect to Gaussian processes,
\textit{\it  Ann. Probab.}, \textbf{29}, (2001), no. 2, 766--801.

\bibitem{BeBe02} Berselli, L.C., Bessaih, H.: Some Results for the Line Vortex Equation,
\textit{Nonlinearity}, \textbf{15} (2002), 1729--1746.

\bibitem{BeGu07}
Berselli, L. C.; Gubinelli, M.:  {\it On the global evolution of vortex filaments, blobs, and small loops in 3D ideal flows},  Comm. Math. Phys., {\bf 269} (2007), no. 3, 693–713.

\bibitem{BeMa02} Bertozzi, A.L., Majda, A.J.: {\it Vorticity and Incompressible Flow}, Cambridge University Press, Cambridge, 2002.

\bibitem{BeGuRu05} Bessaih, H.; Gubinelli, M.; Russo, F.: {\it The Evolution of a
Random Vortex Filament}, The Annals of Probability, \textbf{33} (2005),
1825--1855.

\bibitem{BiHuOkZh08} Biagini, F.: Hu, Y.; {\O}ksendal, B.; Zhang, T.: \textit{\it Stochastic Calculus for Fractional
Brownian Motion and Applications}, Springer, 2008.

\bibitem{CaCoMo03} Carmona, P.; Coutin, L.; Montseny, G.: {\it Stochastic integration with respect to fractional Brownian motion}, Ann. Inst. H. Poincaré Probab. Statist., \textbf{39}, (2003), no. 1, 27--68.

\bibitem{Ch04}  Chorin , A. J.: \textit{\it Vorticity and Turbulence}, Springer-Verlag, New York, 1994.

\bibitem{Coutin2007}
Coutin, L.: An introduction to (stochastic) calculus with respect to fractional Brownian motion. S\'{e}minaire de Probabilit\'{e}s XL, 3–65, Lecture Notes in Math., 1899, Springer, Berlin, 2007.

\bibitem{CoLe07}
Coutin, L.; Lejay, A.: Semi-martingales and rough paths theory. Electron. J. Probab. 10 (2005), no. 23, 761–-785

\bibitem{DeUs98} Decreusefond, L. and \"{U}st\"{u}nel, A. S.: Stochastic analysis of the fractional Brownian motion. \textit{\it  Potential Anal.}
\textbf{10}, 1998, 177--214.



\bibitem{Fl02}
Flandoli, F.: {\it On a probabilistic description of small scale structures in 3D fluids},  Ann. Inst. H. Poincaré Probab. Statist., {\bf 38} (2002), no. 2, 207–-228.




\bibitem{GuLeTi06}
Gubinelli, M.; Lejay, A. ; Tindel, S.:  Young integrals and SPDEs. Potential Anal. 25 (2006), no. 4, 307–-326.


\bibitem{KiMaSa93} Kilbas, A.A., Marichev, O.I. and Samko, S.G.: \textit{\it Fractional
Integrals and Derivatives: Theory and Applications}, Princton University Press, Gordon and Breach, 1993.


\bibitem{KlZa99} Klingenh\"{o}fer, F. and Z\"{a}hle, M.: Ordinary Differential
Equations with Fractal Noise, \textit{Proceedings of the American Mathematical Socaity}, \textbf{127}(1999), 1021--1028.



\bibitem{Lyons98}
Lyons, T. J.: {\it Differential equations driven by rough signals}, Rev. Mat. Iberoamericana {\bf 14} (1998), no. 2, 215–310.

\bibitem{MaVa68} Mandelbrot, B. and Van Ness, J.: Fractional Brownian motions, fractional noises and applications,
\textit{SIAM Review}, \textbf{10}(1968),422--437.

\bibitem{MaPu94} Marchioro, C., Pulvirenti, M.: \textit{Mathematical Theory of Incompressible Nonviscous Fluids}, Springer-Verlag, New York, 1994.


\bibitem{NuRa01} Nualart, D. and R\u{a}scanu, A.: Differential Equations
driven by Fractional Brownian Motion, \textit{Collectanea Mathematica }, \textbf{53}(2001), 55--81.



\bibitem{Sa92} Saffman, P.G.: \textit{\it Vortex Dynamics}, Cambridge University Press, New York, 1992.


\bibitem{Yo36} Young, L.C.: An inequality of the H\"{o}lder type connected with Stieltjes integration,
\textit{Acta Math}, \textbf{67}(1936),251--282.

\end{thebibliography}
\end{document}